\theoremstyle{plain}
\newtheorem{theorem}{Theorem}
\newtheorem{corollary}{Corollary}
\newtheorem{proposition}{Proposition}
\newtheorem{lemma}{Lemma}
\theoremstyle{definition}
\newtheorem{definition}{Definition}
\newtheorem{example}{Example}
\newtheorem{remark}{Remark}
\newcommand{\enm}[1]{\ensuremath{#1}}          %
\newcommand{\cal}[1]{\mathcal{#1}}
\newcommand{\Nm}{\mathrm{Num}}
\newcommand{\CC}{\enm{\mathbb{C}}}
\newcommand{\II}{\enm{\mathbb{I}}}
\newcommand{\RR}{\enm{\mathbb{R}}}
\newcommand{\QQ}{\enm{\mathbb{Q}}}
\newcommand{\FF}{\enm{\mathbb{F}}}
\newcommand{\Rr}{\enm{\cal{R}}}
\renewcommand{\phi}{\varphi}
\renewcommand{\theta}{\vartheta}
\renewcommand{\epsilon}{\varepsilon}
\renewcommand{\to}[1][]{\xrightarrow{\ #1\ }}
\newcommand{\old}[1]{}
\date{}
\begin{document}

\title[numerical range]
{On the convexity of numerical range over certain fields}
\author{E. Ballico}
\address{Dept. of Mathematics\\
 University of Trento\\
38123 Povo (TN), Italy}
\email{ballico@science.unitn.it}
\thanks{The author was partially supported by MIUR and GNSAGA of INdAM (Italy).}
\subjclass[2010]{15A33; 15A60; 12D99; 12F10; 12J15}
\keywords{numerical range; sesquilinear form; real closed field; euclidean field}

\begin{abstract}
Let $L$ be a degree $2$ Galois extension of the field $K$ and $M$ an $n\times n$ matrix with coefficients in $L$. Let $\langle \ ,\ \rangle :
L^n\times L^n\to L$ be the sesquilinear form associated to the involution $\sigma: L\to L$ fixing $K$. This sesquilinear form defines
the numerical range $\mathrm{Num}(M)$ of any $n\times n$ matrix over $L$. In this paper we study the convexity of $\mathrm{Num}(M)$
(under certain assumptions on $K$ and/or $M$). Many of the results are for ordered fields.
\end{abstract}

\maketitle

\section{Introduction}

For any field $F$ let $M_{n,n}(F)$ denote the set of all $n\times n$ matrices with coefficients in $F$. Fix fields $K\subset L$ such that $L$ is a degree $2$ Galois extension of $K$ and call $\sigma$
the generator of the Galois group of the extension $K\subset L$. For any $u=(u_1,\dots ,u_n), v=(v_1,\dots ,v_n)\in L^n$ set
$\langle u,v\rangle := \sum _{i=1}^{n} \sigma (u_i)v_i$. The map $\langle \ ,\ \rangle : L^n\times L^n\to L$ is sesquilinear (linear in the second variable and $\sigma$-linear in the first one) and $\langle u,v\rangle = \sigma (\langle v,u\rangle )$
for all $u, v\in L^n$. For any $M\in M_{n,n}(L)$ and $u\in L$ set $\nu _M(u):= \langle u,Mu\rangle$. We obtain a map $\nu _M: L^n\to L$, call the \emph{numerical map} of $M$.
When $K=\RR$ and $L=\CC$ (and so $\sigma$ is the complex conjugation and $\langle \ ,\ \rangle$ is the usual Hermitian product) the image of the restriction of $\nu _M$ to $C_n(1):= \{z\in L^n\mid \langle z,z\rangle =1\}$ is called the \emph{numerical range} of $M$ (\cite{gr}, \cite{pt}); it is always a convex subset of $\CC$ (\cite{gr}, \cite{pt}) and
the main aim of this paper is to explore the convexity for other $(L,K,\sigma)$, sometimes with strong restrictions on the matrix $M$. Under mild assumptions on $L$ and $K$ we have $M=M^\dagger$ if and only if $\Nm (M)\subseteq K$ (Proposition \ref{b1.00}), but this is not always true (Example \ref{b1.02}). Fix an integer $k>0$ and $M_1,\dots ,M_k\in M_{n,n}(L)$. Let $\nu _{M_1,\dots ,M_k}: L^n\to L^k$ be the map
defined by the formula $\nu _{M_1,\dots ,M_k}(u) =(\langle u,M_1u\rangle ,\dots ,\langle u,M_u\rangle )$; we call it the \emph{joint numerical map} of $M_1,\dots ,M_k$. When $L=\CC$ the image
of the map $ \nu _{M_1,\dots ,M_k}|C_n(1)$ is called the \emph{joint numerical range} of the matrices $M_1,\dots ,M_k$ (\cite{lp}).
In \cite{bal} we introduced the following subsets of $K$. Let $\Delta \subseteq L$ be the set of all $\langle a,a\rangle$, $a\in L$. Since $\sigma (\langle a,a\rangle )=
\langle a,a\rangle$ for any $a\in L$, we have $\Delta \subseteq K$. Note that $0\in \Delta$, that $\Delta \setminus \{0\}$ is a multiplicative group and that $\Delta$ is the image of the norm map $L\to K$. Since $\langle a,a\rangle =a^2$ for all $a\in K$, $\Delta$ contains all squares of elements of $K$. For each $n\ge 1$
let $\Delta _n\subseteq K$ be the sum on $n$ elements of $\Delta$. The set $\Delta _n$ is the set of all $\langle u,u\rangle $ for some $u\in L^n$. We have $\Delta _n +\Delta _m =\Delta _{n+m}$ for all $n>0$, $m>0$ and it is easy to check that $\Delta _n\setminus \{0\}$ is a multiplicative group (\cite[Lemma 2]{bal}). 

Take $a,b\in L$ such that $a\ne b$. The \emph{$\Delta$-convex hull} of $\{a,b\}$ is the set of all $ta+(1-t)b$ with $t\in \Delta \cap (1-\Delta)$. At least if $\mathrm{char}(K) =0$, $\Delta \cap (1-\Delta)$ is infinite (\cite[Lemma 8]{bal}). A set $S\subseteq L$ is said to be
{$\Delta$-convex} if for all $a,b\in S$ with $a\ne b$, $S$ contains the $\Delta$-convex hull of $\{a,b\}$. For any $S\subseteq L$ the $\Delta$-convex hull of $S$ is the intersection
of all $\Delta$-convex subsets of $L$ containing $S$, i.e. the smallest $\Delta$-convex subsets of $L$ containing $S$. If we take $\Delta _n\cap (1-\Delta _n)$ instead of $\Delta \cap (1-\Delta)$ we get the notion of $\Delta _n$-convexity.

We met the following obstacles to get the convexity of $\Nm (M)$ and to study the
numerical range, the joint numerical range, the numerical map and the joint numerical map.

\begin{enumerate}
\item $L$ may be not algebraically closed and hence there are matrices $M\in M_{n,n}(L)$, $n\ge 2$, with eigenvalues not contained in $L$;
\item there are $u\in L^n$, $n\ge 2$, with $u\ne 0$ and $\langle u,u\rangle =0$;
\item there are $u\in L^n$ such that $\langle u,u\rangle \ne 0$, but there is no $t\in L$ with $\langle tu,tu\rangle =1$;
\item $\Delta _n$ may strictly contain $\Delta$.
\end{enumerate}

The obstacles (3) and (4) are equivalent (Remark \ref{pp1}). It is very restrictive to assume that $L$ is algebraically closed, but the only case in which we get $\Delta$-convexity of all numerical ranges (Theorem \ref{i1}) requires that $K$ is a real closed field and so $L$ is algebraically closed (\cite[Theorem 1.2.2]{bcr}). When $L$ is not algebraically closed we may
at least get informations for some matrices, e.g. the ones with all eigenvalues contained in $L$, plus some other conditions are satisfied. We discussed (2), (3) and (4) in \cite{bal}. Here we describe another case in which $\Delta _n =\Delta$ for all $n$ and $\langle u,u\rangle =0$ for some $u\in L^n$ if and only if $u=0$ (Proposition \ref{aa4}).
We say that $\langle \ ,\ \rangle$ is \emph{definite up to $n$} if $\langle u,u\rangle \ne 0$ for all $u\in L^n\setminus \{0\}$.

When $L$ and $K$ are finite fields, say $K=\FF_q$, $L= \FF _{q^2}$ and $\sigma$ the Frobenius map $t\mapsto t^q$, we have $\Delta =K$ and so $\Delta _n=\Delta$ for all $n>0$, so the third condition is always satisfied, but the only non-empty $\Delta$-convex subsets of $L$ are the singletons, $L$ and the affine $K$-lines of $L$ seen as a $2$-dimensional $K$ vector space; there are plenty of examples (\cite{cjklr}) of matrices $M$ with $\sharp (\Nm (M))\notin \{1,q,q^2\}$ and hence with numerical range not $\Delta$-convex. The obstacle (2) may be overcome
in all cases in which we have some definite positive conditions (e.g. if $K$ is a subfield of $\RR$ and $L=K(i)$ with $\sigma$ the complex conjugation).
We axiomatize the non-existence of obstacle (2) in the following way.
We say that $\langle \ ,\ \rangle$ is \emph{definite of to $n$} if $\langle u,u\rangle \ne 0$ for all $u\in L^n$ with $u\ne 0$.  In the set-up of (2) and (3) even if $M$ has an eigenvalue $c\in L$ we may have $c\notin \Nm (M)$ (e.g. see Example \ref{d3}; there are many examples over finite fields (\cite{cjklr}).

The classical text-book proof of the Toeplitz-Hausdorff theorem on the convexity of $\Nm (M)$ uses a reduction to the case in which the numerical range is either a segment
or a region bounded by an ellipse with one or two foci (\cite[\S 1.1]{gr}). We axiomatize this case in the following way.

\begin{definition}\label{d1}
A \emph{$\Delta _n$-ellipse with a unique focus at $0$} is a subset $S\subseteq L$ such that there are $\delta _1\in \Delta _n\setminus \{0\}$ and $\delta _2\in \Delta _n\setminus \{0\}$ with $S = \{\sigma (x)y \mid (x,y)\in L^2, \delta _1x\sigma (x)+\delta_2y\sigma (y)=1\}$. In  this case we say that $S$ is a $\Delta _n$-ellipse with a unique
focus with parameters $(\delta _1,\delta _2)$. A \emph{$\Delta _n$-ellipse with two foci} is a subset $S\subseteq L$ such that there are $\delta _1,\delta _2\in \Delta _n\setminus \{0\}$ and $d_1,d_2\in L^\ast$ with
$S =\{d_1y\sigma (x)+d_2 y\sigma (y)\mid (x,y)\in L^2, \delta _1x\sigma (x)+\delta _2y\sigma (y) =1\}$; $(\delta _1,\delta _2,d_1,d_2)$ are the parameters of the ellipse with $2$ foci. A $\Delta _n$-ellipse is set $S\subseteq L$ such
that there are $a, b\in L$ with $b\ne 0$ and $(S-a)/b$ is a $\Delta _n$-ellipse with one or two foci. We define in the same way the $\Delta$-ellipses.
\end{definition}

When $K$ is a finite field, a $\Delta$-ellipse is not $\Delta$-convex. When $n=2$ we find some cases in which $\Nm (M)$ is an ellipse with
one or two foci (Propositions \ref{aa6}, \ref{aa7}). We show some cases in which for any $a, b\in \Nm (M)$ the set $\Nm (M)$ contains either
the $\Delta$-convex hull of $\{a,b\}$ or a $\Delta$-ellipse with $2$ foci containing $\{a,b\}$ (Propositions \ref{aa6}, \ref{aa7} and \ref{aa9}). Easy examples show that $\Nm (M)$ may not contain the spectrum of $M$
(not even the eigenvalues of $M$ contained in $L$), but it obviously contains an eigenvalue $a\in L$ if there is $u\in L^n$ with $\langle u,u\rangle \in \Delta \setminus \{0\}$ and $Mu =au$. Example \ref{d3} shows that it is not sufficient to assume $\langle u,u\rangle \in \Delta _2\setminus \{0\}$.

We may generalize this observation to two eigenvalues in the following way.

\begin{theorem}\label{d2}
Assume that $\langle \ ,\ \rangle $ is definite up to $n$ and take $M\in M_{n,n}(L)$. Fix eigenvalues $a,b\in L$ of $M$ with $a\ne b$ and assume the existence
$u,v\in L^n$ such that $\langle u,u\rangle \in \Delta \setminus \{0\}$, $\langle v,v\rangle  \in \Delta \setminus \{0\}$, $Mu=au$ and $Mv=bv$. 

\quad (a)  If $\langle u,v\rangle =0$, then $\Nm (M)\supseteq \{ta+(1-t)b\} _{t\in \Delta \cap (1-\Delta )}$.

\quad (b) If $\langle u,v\rangle \ne 0$, then there
is $S\subseteq \Nm (M)$ such that $a\in S$, $b\in S$ and $S$ is a $\Delta _n$-ellipse.
\end{theorem}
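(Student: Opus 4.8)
The plan is to reduce to the $n=2$ case by working inside the two-dimensional $L$-subspace spanned by the eigenvectors $u,v$, just as in the classical Toeplitz--Hausdorff argument. First I would note that since $\langle u,u\rangle \in \Delta\setminus\{0\}$ and $\langle v,v\rangle\in\Delta\setminus\{0\}$, and $\langle\ ,\ \rangle$ is definite up to $n$, the vectors $u,v$ are $L$-linearly independent (if $v = \lambda u$ then $b v = Mv = \lambda Mu = \lambda a u = a v$, forcing $a=b$), so $W := Lu + Lv$ is a genuine $2$-dimensional subspace. The restriction of $\langle\ ,\ \rangle$ to $W$ is again sesquilinear and Hermitian, and any vector $w = xu + yv \in W$ satisfies $Mw = a x u + b y v$, so $\nu_M(w) = \langle w, Mw\rangle$ depends only on $x,y$ and the Gram data $\alpha := \langle u,u\rangle$, $\gamma := \langle v,v\rangle$, $\beta := \langle u,v\rangle$ (with $\langle v,u\rangle = \sigma(\beta)$). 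Concretely,
\[
\langle w,w\rangle = \alpha\, \sigma(x)x + \sigma(\beta)\sigma(x)y + \beta\, \sigma(y)x + \gamma\, \sigma(y)y,
\]
\[
\nu_M(w) = a\,\alpha\,\sigma(x)x + b\,\sigma(\beta)\sigma(x)y + a\,\beta\,\sigma(y)x + b\,\gamma\,\sigma(y)y.
\]
All the contribution to $\Nm(M)$ coming from $W$ is $\{\nu_M(w)\mid w\in W,\ \langle w,w\rangle = 1\}$, which is clearly a subset of $\Nm(M)$; so it suffices to show this set contains the desired $\Delta$-convex hull (case (a)) or is, up to affine change, a $\Delta_n$-ellipse containing $a$ and $b$ (case (b)).

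For part (a), with $\beta = 0$ the constraint is $\alpha\,\sigma(x)x + \gamma\,\sigma(y)y = 1$ and $\nu_M(w) = a\,\alpha\,\sigma(x)x + b\,\gamma\,\sigma(y)y$. Writing $s := \alpha\,\sigma(x)x$ and $1-s = \gamma\,\sigma(y)y$, we get $\nu_M(w) = sa + (1-s)b$. The point is to identify which $s\in K$ arise: $s = \alpha\,\sigma(x)x$ ranges over $\alpha\cdot(\text{norms})$, i.e.\ over $\Delta$ since $\alpha\in\Delta\setminus\{0\}$ and $\Delta\setminus\{0\}$ is a multiplicative group, as $x$ ranges over $L$; similarly $1-s\in\Delta$ is achievable with the matching $y$ (solvability of $\gamma\,\sigma(y)y = 1-s$ for $y\in L$ whenever $1-s\in\Delta$, using that $\gamma\in\Delta\setminus\{0\}$). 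Hence the attainable $s$ are exactly those in $\Delta\cap(1-\Delta)$, giving $\{ta+(1-t)b\}_{t\in\Delta\cap(1-\Delta)}\subseteq\Nm(M)$, which is precisely the claim. A small point to check is that the two scalar equations can be solved simultaneously with a single $(x,y)$, but they decouple, so this is immediate.

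For part (b), with $\beta\ne 0$ I would first normalize. After rescaling $u$ and $v$ by suitable elements of $L$ (which only changes $\alpha,\gamma,\beta$ by norms and $\beta$ by a unit, and does not change the eigenvalue equations), one reduces to a normal form for the Gram matrix — ideally $\alpha,\gamma\in\Delta\setminus\{0\}$ fixed and $\beta$ a convenient unit, or even a "diagonalize the form" step if the form restricted to $W$ is definite (which it is, being definite up to $n$). Then substitute $x,y$ and expand: the constraint becomes $\delta_1 x\sigma(x) + \delta_2 y\sigma(y) = 1$ for suitable $\delta_1,\delta_2\in\Delta_n\setminus\{0\}$ (possibly after the linear change of coordinates that diagonalizes the Hermitian form, which is where $\Delta_n$ rather than $\Delta$ enters — the new "squared lengths" are sums of at most $n$ norms), and the image $\nu_M(w)$ becomes an expression of the shape $d_0 + d_1 y\sigma(x) + d_2 y\sigma(y)$ (the diagonal terms $x\sigma(x)$ and $y\sigma(y)$ can be combined using the constraint to absorb one of them into a constant plus a multiple of $y\sigma(y)$). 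Matching this against Definition \ref{d1}, $\{\nu_M(w) : \delta_1 x\sigma(x)+\delta_2 y\sigma(y)=1\}$ is exactly a $\Delta_n$-ellipse with two foci (or one focus, in the degenerate case $d_1=0$ or $d_2=0$), translated by $a,b$ appropriately; and it contains $a$ (take $y=0$, $x$ with $\delta_1 x\sigma(x)=1$, which is solvable since $\delta_1\in\Delta_n\setminus\{0\}$ and that set is a multiplicative group, so $\delta_1^{-1}\in\Delta_n$... here I must be careful: I actually need $\delta_1^{-1}$ to be a \emph{norm}, i.e.\ in $\Delta_1=\Delta$, which requires the normalization to have arranged $\delta_1\in\Delta$, not merely $\Delta_n$) and similarly $b$ (take $x=0$).

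The main obstacle is exactly this last bookkeeping issue in part (b): ensuring that the endpoints $a$ and $b$ genuinely lie in the set, which forces the normalization of $\alpha$ and $\gamma$ to land in $\Delta$ (equivalently, that $\langle u,u\rangle$ and $\langle v,v\rangle$ are actual norms, which is given), while the \emph{cross terms} introduced by diagonalizing the $2\times 2$ Hermitian form only let us control the parameters in $\Delta_n$. So the delicate part is choosing the coordinate change on $W$ so that the constraint has the form $\delta_1 x\sigma(x) + \delta_2 y\sigma(y) = 1$ with the coordinate axes $\{y=0\}$ and $\{x=0\}$ still corresponding to the lines $Lu$ and $Lv$ — i.e.\ diagonalizing $\langle\ ,\ \rangle|_W$ \emph{without moving the two distinguished eigen-directions}. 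In general one cannot do both: orthogonalizing $\{u,v\}$ (Gram--Schmidt, replacing $v$ by $v - (\beta/\alpha)u$ say) destroys the property that the second basis vector is an eigenvector. The resolution is that we don't need the new basis to consist of eigenvectors everywhere; we only need that $x=0$ and $y=0$ recover $v$ and $u$ respectively up to scalar, which Gram--Schmidt from one side does preserve for one of them, and then one handles the asymmetry by a direct computation that the resulting locus still passes through both $a$ and $b$ — this is where the $\Delta_n$-ellipse (as opposed to $\Delta$-ellipse) flexibility is genuinely used, and it is the step I expect to require the most care and possibly a hypothesis-chase back to the definiteness assumption to guarantee $\delta_1,\delta_2\ne 0$.
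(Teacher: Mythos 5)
Your overall strategy --- restrict attention to the span $W=Lu+Lv$, handle the orthogonal case by a direct parametrization, and in the non-orthogonal case apply one step of Gram--Schmidt and read off an ellipse --- is exactly the paper's. Part (a) as you wrote it is correct and complete: the paper first rescales $u,v$ to $\langle u,u\rangle=\langle v,v\rangle=1$ via Remark \ref{pp1} (legitimate precisely because these values lie in $\Delta\setminus\{0\}$ and $\Delta\setminus\{0\}$ is a multiplicative group) and then takes $m=xu+yv$ with $x\sigma(x)+y\sigma(y)=1$; your unnormalized version, with $s=\alpha\sigma(x)x$ ranging over $\Delta$ because $\alpha\Delta=\Delta$, is equivalent.

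In part (b), the step you flag as delicate is a genuine gap as you left it, and the specific fix you propose for the endpoint $b$ would fail. After normalizing $\langle u,u\rangle=\langle v,v\rangle=1$ and $a=0$, $b=1$, set $w=v-\langle v,u\rangle u$ and $c=\langle w,w\rangle$, which is nonzero by definiteness and lies in $\Delta_n$; with $m=xu+yw$ the constraint is $x\sigma(x)+cy\sigma(y)=1$ and $\langle m,Mm\rangle=\langle v,u\rangle\sigma(x)y+c\sigma(y)y$, a $\Delta_n$-ellipse with two foci. You propose to exhibit $b\in S$ by taking $x=0$, which requires solving $cy\sigma(y)=1$, i.e.\ $1/c\in\Delta$; but $c$ is only known to lie in $\Delta_n$, and by Remark \ref{pp1} such solvability can genuinely fail when $\Delta_n\ne\Delta$ (this is exactly obstacles (3)--(4), and Example \ref{d3} shows the failure occurs, e.g.\ over $\QQ(i)$). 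The correct resolution --- the paper's, which you gesture at with ``a direct computation that the locus still passes through both $a$ and $b$'' but do not carry out --- is that $b$ is attained not on the axis $x=0$ but at the point $m=v$ itself, i.e.\ $(x,y)=(\langle v,u\rangle,1)$: this satisfies the constraint automatically because $\langle v,v\rangle=1$, and yields $\langle v,Mv\rangle=b$. (For $a$ your fix is the right one: normalizing $\langle u,u\rangle=1$ makes $\delta_1=1$ and $(x,y)=(1,0)$ works.) With that one computation supplied, your argument coincides with the paper's.
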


Example \ref{d3} shows that we cannot use $\Delta _2$ instead of $\Delta$; in Example \ref{d3} we get a $\Delta _2$-ellipse $S \subseteq \Nm (M)$, but $a\notin S$
and $b\ne S$, because $a\notin \Nm (M)$ and $b\notin \Nm (M)$.

In section \ref{S3} we consider the case in which $K$ as an ordering (\cite[Ch. 1]{bcr}). We prove the following results.

\begin{proposition}\label{aa4}
Take $K$ with an ordering $<$ such that each positive element of $K$ is a square. Take $L:= K(i)$ with $\sigma$ induced by the map $i\mapsto -i$. Take $M\in M_{n,n}(L)$ such that $M = M^\dagger$. Then $\Nm (M)$ is $\Delta$-convex.
\end{proposition}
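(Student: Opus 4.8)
The plan is to reduce to $2\times 2$ Hermitian matrices by a Toeplitz--Hausdorff--style compression argument and then dispose of that case by an explicit diagonalization. The one genuine difficulty is that a euclidean $K$ need not be real closed, so $L=K(i)$ need not be algebraically closed, and the characteristic polynomial of an $n\times n$ Hermitian matrix need not split over $K$; hence one cannot simply diagonalize $M$ globally. Passing to a $2$-dimensional subspace is exactly what circumvents this: in degree $2$ the discriminant of the characteristic polynomial turns out to be a sum of two squares in $K$, and the euclidean hypothesis forces it to be a square, so the two eigenvalues land back in $K$.

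First I would record the consequences of the hypothesis. Since $\langle a,a\rangle=a\sigma(a)$ is a sum of two squares in $K$ (hence $\ge 0$), and since conversely $0=\langle 0,0\rangle$ while every $c>0$ is a square $d^2=\langle d,d\rangle$ with $d\in K$, the image $\Delta$ of the norm map equals $\{t\in K:t\ge 0\}$. Consequently $\Delta_n=\Delta$ for every $n$ and $\Delta\cap(1-\Delta)=\{t\in K:0\le t\le 1\}$, so ``$\Nm(M)$ is $\Delta$-convex'' means precisely that for all $a,b\in\Nm(M)$ the order-segment $\{ta+(1-t)b:0\le t\le 1\}$ lies in $\Nm(M)$. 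Moreover $\langle u,u\rangle=\sum_i \sigma(u_i)u_i\ge 0$ and vanishes only for $u=0$, so $\langle\ ,\ \rangle$ is definite up to every $n$; hence every $L$-subspace of $L^n$ is non-degenerate and Gram--Schmidt is available, since one only ever divides by $\sqrt{\langle w,w\rangle}$, a square root that exists in $K$ because $\langle w,w\rangle$ is a positive element of $K$. Finally $M=M^\dagger$ gives $\sigma(\nu_M(u))=\sigma(\langle u,Mu\rangle)=\langle Mu,u\rangle=\langle u,M^\dagger u\rangle=\nu_M(u)$, so $\Nm(M)\subseteq K$ (cf. Proposition \ref{b1.00}).

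Next I would reduce to $n=2$. Fix $a,b\in\Nm(M)$ with $a\ne b$, and write $a=\nu_M(u)$, $b=\nu_M(v)$ with $\langle u,u\rangle=\langle v,v\rangle=1$; then $u$ and $v$ are $L$-linearly independent. Let $W=\mathrm{span}_L(u,v)$ and let $\pi_W:L^n\to W$ be the orthogonal projection, using $L^n=W\oplus W^\perp$ (valid by non-degeneracy). The compression $N:=\pi_W\circ M|_W:W\to W$ satisfies $\langle w,Nw\rangle=\langle w,Mw\rangle$ for all $w\in W$ and is self-adjoint for $\langle\ ,\ \rangle|_W$, because $M=M^\dagger$ gives $\langle Mx,y\rangle=\langle x,My\rangle$ for all $x,y\in L^n$. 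Applying Gram--Schmidt to $u,v$ produces an orthonormal basis of $W$ in which $N$ becomes a $2\times 2$ matrix $N'=(N')^\dagger$, while $a,b\in\Nm(N')\subseteq\Nm(M)$. So it suffices to prove the proposition for $N'$.

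Finally, the $2\times 2$ case. Let $N'$ have diagonal entries $a_1,a_2\in K$ and off-diagonal entries $\beta,\sigma(\beta)$ with $\beta\in L$. Its characteristic polynomial $x^2-(a_1+a_2)x+(a_1a_2-\beta\sigma(\beta))\in K[x]$ has discriminant $(a_1-a_2)^2+4\beta\sigma(\beta)$, a sum of squares of elements of $K$, hence a square $e^2$ with $e\in K$, $e\ge 0$; so the eigenvalues $\lambda_1,\lambda_2=\tfrac12(a_1+a_2\pm e)$ lie in $K$. If $e=0$ then $a_1=a_2$ and $\beta=0$, $N'$ is scalar, and $\Nm(N')$ is a single point. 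If $e\ne 0$ then $\lambda_1\ne\lambda_2$; taking nonzero kernel vectors of $N'-\lambda_iI$ in $L^2$ and normalizing (dividing by a square root in $K$) yields unit eigenvectors $u_1,u_2$, which are orthogonal by the usual computation $(\lambda_1-\lambda_2)\langle u_1,u_2\rangle=\langle N'u_1,u_2\rangle-\langle u_1,N'u_2\rangle=0$. In this orthonormal eigenbasis $N'=\mathrm{diag}(\lambda_1,\lambda_2)$, so $\Nm(N')=\{\lambda_1 p+\lambda_2(1-p):p\in\Delta\cap(1-\Delta)\}$, i.e. the order-interval with endpoints $\lambda_1,\lambda_2$ (one could also invoke Theorem \ref{d2}(a) at this point). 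Since $a,b\in\Nm(N')$ and $ta+(1-t)b$ lies between $a$ and $b$ for $0\le t\le 1$, we conclude $\{ta+(1-t)b:0\le t\le 1\}\subseteq\Nm(N')\subseteq\Nm(M)$, which is exactly $\Delta$-convexity.
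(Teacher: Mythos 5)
Your proof is correct and follows essentially the same route as the paper's: compress $M$ to the $2$-dimensional span of the two vectors realizing $a$ and $b$ (made orthonormal via Gram--Schmidt, which works because the form is definite and positive elements of $K$ have square roots), observe that the discriminant of the resulting $2\times 2$ Hermitian matrix is a sum of squares and hence a square by the euclidean hypothesis, and diagonalize to see that its numerical range is the order-interval between the two eigenvalues. The only cosmetic differences are that the paper normalizes to trace zero before computing the discriminant and cites Lemma \ref{b3} and Remark \ref{bb1} where you compute the diagonal case directly.
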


\begin{proposition}\label{aa9}
Take $K$ with an ordering $<$ such that each positive element of $K$ is a square. Take $L:= K(i)$ with $\sigma$ induced by the map $i\mapsto -i$. Take $M\in M_{n\times n}(K)$ and $a, b\in \Nm (M)$ with $a\ne b$.
Then there is $S\subseteq \Nm (M)$ such that $a\in S$, $b\in S$ and $S$ is either the $\Delta$-convex hull of $a$ and $b$ or an ellipse with two foci containing $\{a,b\}$.
\end{proposition}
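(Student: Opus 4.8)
The plan is to imitate the textbook reduction of the Toeplitz--Hausdorff theorem to the $2\times 2$ case. Since every positive element of $K$ is a square we have $\Delta=\{t\in K\mid t\ge 0\}$ and $\Delta\cap(1-\Delta)=\{t\in K\mid 0\le t\le 1\}$, and $\langle u,u\rangle=\sum_j u_j\sigma(u_j)$ is a nonnegative element of $K$ vanishing only at $u=0$; thus $\langle\ ,\ \rangle$ is definite up to $n$ and every nonzero vector of $L^n$ rescales into $C_n(1)$. So there are $u,v\in C_n(1)$ with $\nu_M(u)=a$, $\nu_M(v)=b$, and they are $L$-linearly independent (if $v=\lambda u$ then $\lambda\sigma(\lambda)=\langle v,v\rangle=1$, whence $b=\nu_M(v)=\lambda\sigma(\lambda)\,a=a$, against $a\ne b$). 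First I would pass to the plane $W:=Lu+Lv$; the form on $W$ is definite, and as square roots of positive elements of $K$ exist, Gram--Schmidt gives an orthonormal basis $e_1,e_2$ of $W$ with $e_1=u$. Put $N:=(\langle e_i,Me_j\rangle)_{1\le i,j\le 2}\in M_{2,2}(L)$. A direct computation shows that for $w=x_1e_1+x_2e_2\in W$ one has $\langle w,w\rangle=x_1\sigma(x_1)+x_2\sigma(x_2)$ and $\nu_M(w)=\langle(x_1,x_2),N(x_1,x_2)\rangle$; hence $\Nm(N)=\nu_M(W\cap C_n(1))\subseteq\Nm(M)$ and $a,b\in\Nm(N)$. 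So it suffices to prove the conclusion for the $2\times 2$ matrix $N$.

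Next I would describe $\Nm(N)$. Because $K$ is euclidean, every element of $L=K(i)$ is a square in $L$ (for $c=x+yi$ solve $z^2=c$ using $\sqrt{x^2+y^2}$), so the characteristic polynomial of $N$ splits over $L$; $N$ then has a unit eigenvector (an eigenvector $z\ne 0$ has $\langle z,z\rangle>0$, hence a square, so $z$ normalises), which I extend to an orthonormal basis and use to conjugate $N$, by a unitary matrix (this preserves $\Nm$), into upper triangular form with diagonal $\lambda_1,\lambda_2$ and upper-right entry $c$. Parametrising a unit vector $(x_1,x_2)$ by $r:=x_1\sigma(x_1)\in\Delta\cap(1-\Delta)$ and $z:=x_2\sigma(x_1)$, where $z\sigma(z)=r(1-r)$ and, for fixed $r$, $z$ runs over the whole "circle" of that norm, gives
\[
\Nm(N)=\{\lambda_1 r+\lambda_2(1-r)+cz\mid r\in\Delta\cap(1-\Delta),\ z\in L,\ z\sigma(z)=r(1-r)\}.
\]
By Propositions \ref{aa6} and \ref{aa7} this set is: the segment $\{\lambda_1 r+\lambda_2(1-r)\mid r\in\Delta\cap(1-\Delta)\}$, \ie the $\Delta$-convex hull of $\{\lambda_1,\lambda_2\}$, if $c=0$; the disk $\{\zeta\in L\mid(\zeta-\lambda_1)\sigma(\zeta-\lambda_1)\le\tfrac{1}{4}\,c\sigma(c)\}$ if $c\ne 0$ and $\lambda_1=\lambda_2$; and a $\Delta$-ellipse with the two foci $\lambda_1,\lambda_2$ if $c\ne 0$ and $\lambda_1\ne\lambda_2$.

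Then I would finish by cases. If $\Nm(N)$ is a segment or a disk, I take $S$ to be the $\Delta$-convex hull of $\{a,b\}$: a segment is evidently $\Delta$-convex, and a disk $\{(\zeta-\zeta_0)\sigma(\zeta-\zeta_0)\le\rho\}$ is $\Delta$-convex by a short Cauchy--Schwarz estimate over the ordered field $K$; since $a,b\in\Nm(N)$ this yields $S\subseteq\Nm(N)\subseteq\Nm(M)$. If $\Nm(N)$ is a $\Delta$-ellipse with two foci, I take $S:=\Nm(N)$, so $a,b\in S\subseteq\Nm(M)$ and $S$ has the required form.

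The hard part will be the middle step: checking that the displayed union of circles is the \emph{filled} $\Delta$-ellipse and not only the circles through its interior. A point $\zeta\in L$ belongs to $\Nm(N)$ exactly when the quadratic in $r$ with $z=(\zeta-\lambda_1 r-\lambda_2(1-r))/c$ and $z\sigma(z)=r(1-r)$ has a root in $\Delta\cap(1-\Delta)$; after a $\Delta$-affine normalisation this becomes a quadratic $q(r)$ with positive leading coefficient and $q(0),q(1)\ge 0$ whose two roots cannot both fall strictly outside $[0,1]$ on one side, since in normalised coordinates the centre of the ellipse is bounded by its semi-axis. This is where the euclidean hypothesis (solvability of these quadratics in $K$) is genuinely used, and it is the input I would draw from Propositions \ref{aa6} and \ref{aa7}.
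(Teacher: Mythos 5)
Your proposal is correct, and its skeleton --- rescale $u,v$ into $C_n(1)$, note they are independent, compress $M$ to the plane $W=Lu+Lv$ via Gram--Schmidt, and then classify the numerical range of the resulting $2\times 2$ matrix by its eigenvalues --- is exactly the paper's proof, which invokes Lemma \ref{aa1} for the compression and Propositions \ref{aa6} and \ref{aa7} for the case analysis. You diverge in two ways, both to your advantage. First, where the paper rules out eigenvalues outside $L$ by a Galois argument (the characteristic polynomial of $N$ is claimed to have coefficients in $K$, which leans on the hypothesis $M\in M_{n\times n}(K)$), you observe that $L=K(i)$ is quadratically closed when $K$ is euclidean, so the degree-$2$ characteristic polynomial of $N$ splits over $L$ automatically; this is cleaner and shows the hypothesis that $M$ has entries in $K$ is not actually needed. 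Second, you explicitly handle the degenerate case $c\ne 0$, $\lambda_1=\lambda_2$, where $\Nm(N)$ is a one-focus ellipse: since the conclusion only allows $S$ to be a $\Delta$-convex hull or a two-foci ellipse, one must check that the one-focus ellipse contains the $\Delta$-segment from $a$ to $b$, and your disk-plus-Cauchy--Schwarz argument does this (the identification of $\{\sigma(x)y\mid x\sigma(x)+y\sigma(y)=1\}$ with the full disk of radius $1/2$ amounts to solving $t^2-t+z\sigma(z)=0$ in $[0,1]$, which euclideanness permits). The paper's proof stops at ``apply Proposition \ref{aa7}'' and leaves this step implicit. One small economy you could make: your closing ``hard part'' about the union of circles filling the two-foci ellipse is not needed for this proposition, because Definition \ref{d1} defines a $\Delta$-ellipse with two foci as the image set itself, so in that case $S:=\Nm(N)$ qualifies verbatim; the filling argument is only genuinely required in the one-focus/disk case, where your explicit quadratic-in-$r$ computation already supplies it.
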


\begin{theorem}\label{i1}
Assume that $K$ is a real closed field and that $L =K(i)$ with $\sigma$ the complex conjugation. For any $M\in M_{n,n}(L)$ the set $\Nm (M)$ is a closed and bounded
$\Delta$-convex subset of $L$.
\end{theorem}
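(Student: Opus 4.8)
The plan is to deduce the entire statement from the classical case $K=\RR$, $L=\CC$ by the transfer principle for real closed fields, that is, by the completeness of the first-order theory of real closed fields (\cite[Ch.~1 and 5]{bcr}).

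First I would record what the conclusion amounts to over a real closed $K$. Every nonnegative element of $K$ is a square and every $x^2+y^2$ with $x,y\in K$ is nonnegative, so the image $\Delta$ of the norm map $L\to K$ equals $\{a\in K\mid a\ge 0\}$; hence $\Delta_n=\Delta$ for all $n$ and $\Delta\cap(1-\Delta)=\{t\in K\mid 0\le t\le 1\}$. Therefore ``$\Delta$-convex'' coincides with ``convex'' for the affine structure of $L$ viewed as the plane $K^2$. Writing $z_j=x_j+iy_j$, the set $C_n(1)=\{z\in L^n\mid\langle z,z\rangle=1\}$ becomes the sphere $\{\sum_{j=1}^n(x_j^2+y_j^2)=1\}\subseteq K^{2n}$, which is closed and bounded, while $\nu_M|_{C_n(1)}$ is the restriction of a polynomial map $K^{2n}\to K^2$; in particular $\Nm(M)$ is a semialgebraic subset of $K^2$.

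Next, each of the three assertions ``$\Nm(M)$ is bounded'', ``$\Nm(M)$ is closed'', ``$\Nm(M)$ is $\Delta$-convex'' is expressible by one first-order sentence of the language of ordered fields, once the entries of $M$ are encoded by their $2n^2$ real and imaginary parts and the relation ``$w\in\Nm(M)$'' is replaced by ``there is $z=(x_1,y_1,\dots,x_n,y_n)\in K^{2n}$ with $\sum_j(x_j^2+y_j^2)=1$ and $\nu_M(z)=w$''. For example, boundedness is $\forall M\,\exists R\,\forall z\,(\langle z,z\rangle=1\Rightarrow \nu_M(z)\sigma(\nu_M(z))\le R)$; closedness is the first-order statement that the complement of $\Nm(M)$ is open in the order topology; and convexity is $\forall M\,\forall a\,\forall b\,\forall t\,(a\in\Nm(M)\wedge b\in\Nm(M)\wedge 0\le t\le 1\Rightarrow ta+(1-t)b\in\Nm(M))$. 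All three sentences hold for $K=\RR$: there $\Nm(M)$ is the image of the compact set $C_n(1)$ under a continuous map, hence closed and bounded, and it is convex by the Toeplitz--Hausdorff theorem (\cite[\S 1.1--1.2]{gr}). Since $\RR$ and $K$ are models of the same complete theory, these sentences hold over $K$ too, which is exactly the assertion of the theorem, ``closed'' and ``bounded'' now being meant in the order topology of $K$.

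The only delicate step is the bookkeeping just sketched: one must check that the naive analytic descriptions of closedness, boundedness and convexity of the semialgebraic set $\Nm(M)$ are genuinely first-order — in particular that the quantifier over $z$ hidden in ``$w\in\Nm(M)$'' is an honest quantifier over the $2n$ coordinates of a single vector, so no quantification over subsets intrudes — and that the classical facts over $\RR$ are available in the precise form quoted. If one prefers to avoid model theory for two of the three properties, closedness and boundedness also follow from the general fact that a continuous semialgebraic map carries closed and bounded semialgebraic sets to closed and bounded semialgebraic sets (\cite[Ch.~2 and 9]{bcr}), applied to $\nu_M|_{C_n(1)}$; only convexity would then still require the transfer principle, or else a direct adaptation of the textbook reduction of Toeplitz--Hausdorff to the $2\times 2$ case, which is legitimate here because $\langle\ ,\ \rangle$ is positive definite over a real closed field, so Gram--Schmidt is available.
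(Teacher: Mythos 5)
Your proof is correct, but it follows a genuinely different route from the paper. You invoke the completeness of the first-order theory of real closed fields to transfer the three properties (closedness, boundedness, $\Delta$-convexity, each expressed for fixed $n$ as a single sentence with the entries of $M$ universally quantified and the membership relation $w\in\Nm(M)$ unwound as an existential quantifier over the $2n$ coordinates of a unit vector) from the classical case $K=\RR$, where they are the Toeplitz--Hausdorff theorem plus compactness of the unit sphere. The paper instead re-runs the Psarrakos--Tsatsomeros argument internally to $K$: it first proves (Proposition \ref{ii1}) that for Hermitian $M$ each nonempty fiber $\nu_M^{-1}(t)$ is semi-algebraically arc-connected, by diagonalizing $M$ and exhibiting an explicit semi-algebraic arc $u_j(t)=\sqrt{(1-t)a_j^2+tb_j^2}$; then, after normalizing $a=0$, $b=1$ and splitting $M=M_++\beta M_-$ into Hermitian parts, it connects the two witnessing unit vectors by a semi-algebraic arc inside $\nu_{M_-}^{-1}(0)$ and observes that the image of that arc under $\langle\,\cdot\,,M_+\,\cdot\,\rangle$ is a semi-algebraically connected subset of $K$, hence an interval containing $\{0,1\}$ by \cite[Proposition 2.1.7]{bcr}. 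Your transfer argument is shorter and outsources all the work to the real case, at the cost of the bookkeeping you acknowledge (checking that closedness and $\Delta$-convexity really are first-order once $\Delta\cap(1-\Delta)=[0,1]$ is identified, and that the quantifier hidden in $w\in\Nm(M)$ ranges only over coordinates); the paper's argument is longer but stays inside semi-algebraic geometry over $K$, produces explicit arcs, and yields the fiber-connectedness statement (Proposition \ref{ii1}) as a by-product of independent interest. Both correctly reduce $\Delta$-convexity to ordinary convexity of the plane $K^2$ via $\Delta=K_{\ge 0}$, and your fallback observation that closedness and boundedness follow from the image of a closed bounded semi-algebraic set under a continuous semi-algebraic map being closed and bounded is exactly the content of the paper's Remark \ref{da1}.
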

 
 As an immediate corollary of Theorem \ \ref{i1} we get the following result.

\begin{corollary}\label{i2}
Assume that $K$ is a real closed field and that $L =K(i)$ with $\sigma$ the complex conjugation.  Fix $M, N\in M_{n,n}(L)$ such that
$M^\dagger =M$ and $N^\dagger =N$. Then the joint numerical range $\Nm (M,N)\subset L^2$ is $\Delta$-convex. If $K$ is real closed,
then $\Nm (M,N)$ is a closed, bounded and $\Delta$-convex subset of $K^2$
\end{corollary}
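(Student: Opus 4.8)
The plan is to deduce the statement from the single-matrix Theorem \ref{i1} applied to the auxiliary matrix $P := M + iN \in M_{n,n}(L)$. First I would note that $M = M^\dagger$ forces $\Nm(M) \subseteq K$: for every $u \in L^n$,
\[
\nu_M(u) = \langle u, Mu\rangle = \sigma(\langle Mu, u\rangle) = \sigma(\langle u, M^\dagger u\rangle) = \sigma(\nu_M(u)),
\]
so $\nu_M(u)$ is fixed by $\sigma$ and hence lies in $K$ (this is also the content of Proposition \ref{b1.00}); the same holds for $N$. Therefore $\Nm(M,N) \subseteq K^2$, where throughout we identify $L$ with $K^2$ by $x + iy \leftrightarrow (x,y)$, $x,y \in K$.

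Next, by linearity of $\langle\ ,\ \rangle$ in the second slot, $\nu_P(u) = \langle u, Mu\rangle + i\langle u, Nu\rangle = \nu_M(u) + i\,\nu_N(u)$ for every $u$, and since $\nu_M(u), \nu_N(u) \in K$ this exhibits $(\nu_M(u),\nu_N(u))$ as the image of $\nu_P(u)$ under the identification $L \cong K^2$. As the domain $C_n(1) = \{z \in L^n \mid \langle z,z\rangle = 1\}$ does not involve the matrix, this identification carries $\Nm(P)$ bijectively onto $\Nm(M,N)$. Now I would invoke Theorem \ref{i1} for $P$ (no hypothesis on $P$ is needed there): since $K$ is real closed, $\Nm(P)$ is a closed, bounded, $\Delta$-convex subset of $L$.

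It remains to transport these properties along $L \cong K^2$. The map $x+iy \mapsto (x,y)$ is a $K$-linear isomorphism, hence a homeomorphism for the order topology on $K$ and the product topology on $K^2$, and it preserves boundedness (using that $|x+iy| = \sqrt{x^2+y^2}$ makes sense because $K$ is real closed, and is comparable with $\max(|x|,|y|)$); so $\Nm(M,N)$ is closed and bounded in $K^2$. For $\Delta$-convexity, take distinct $p,q \in \Nm(M,N)$ and let $a \ne b$ in $\Nm(P)$ be their preimages. For every $t \in \Delta \cap (1-\Delta)$ we have $ta + (1-t)b \in \Nm(P)$ by Theorem \ref{i1}, and since $\Delta \subseteq K$ this scalar combination is computed componentwise on $K^2$, giving $tp + (1-t)q \in \Nm(M,N)$; hence $\Nm(M,N)$ is $\Delta$-convex. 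The only point that needs care — and it is bookkeeping rather than mathematics — is that $\Delta$, $\Delta_n$, and the resulting scalar set $\Delta \cap (1-\Delta)$ depend only on the triple $(L,K,\sigma)$, so the same notion of $\Delta$-convex hull governs $\Nm(P)$ in $L$ and $\Nm(M,N)$ in $K^2$; no input beyond Theorem \ref{i1} is required.
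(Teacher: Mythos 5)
Your proof is correct and follows essentially the same route as the paper: both form the auxiliary matrix $M+iN$, observe that Hermitianness puts $\Nm(M)$ and $\Nm(N)$ in $K$ so that $\nu_{M+iN}(u)=\nu_M(u)+i\nu_N(u)$ identifies $\Nm(M+iN)$ with $\Nm(M,N)$ under $L\cong K^2$, and then invoke Theorem \ref{i1}. Your write-up is in fact slightly more careful than the paper's at the step identifying the two numerical ranges (the paper phrases it as a componentwise ``if and only if,'' whereas you correctly note the two components come from the same vector $u$).
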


\section{Preliminaries}
For any $n>0$ let $\II _{n\times n}$ denote the identity $n\times n$ matrix (over any field). For any field $F$ set
$F^\ast:= F\setminus \{0\}$. Note that $\Delta _2 =\Delta$ if and only if $\Delta _n=\Delta$ for all $n>1$.

\begin{remark}\label{pp1}
Take $d\in \Delta \setminus \{0\}$ and $u\in L^n$ such that $\langle u,u\rangle =d$. Since $\Delta \setminus \{0\}$ is a multiplicative group, there is $t\in L$ with
$t\sigma (t) =1/d$. Note that $\langle tu,tu\rangle =1$. Fix $a\in \Delta _n\setminus \{0\}$. By assumption there is $v\in L^n$ such that $\langle v,v\rangle =0$.
Assume the existence of $k\in L$ such that $\langle kv,kv\rangle =1$. We get $a =1/c$ with $c:= k\sigma (k)\in \Delta \setminus \{0\}$. Since  $\Delta \setminus \{0\}$ is a multiplicative group, we get $a\in \Delta$.
\end{remark}

\begin{remark}\label{u1}
For any $M\in M_{n,n}(L)$ and any $u, v\in L^n$ we have $\langle u,Mv \rangle = \langle M^\dagger u, v\rangle = \sigma (\langle v,M^\dagger u\rangle)$.
Thus $\Nm (M)\subseteq K$ if $M^\dagger =M$.
\end{remark}

 We fix an element $\beta \in L\setminus K$ with the following property.

First assume $\mathrm{char}(K)\ne 2$. We take as $\beta$ one of the roots of a polynomial $t^2-\alpha$, with $\alpha \in K$, $\alpha $ not a square in $K$ and
$L\cong K[t]/(t^2-\alpha )$; note that $\sigma (\beta )=-\beta$ in this case. We have $L=K+K\beta$ as a $K$-vector space and we see $L^n =K^n+\beta K^n$ as a $2n$-dimensional
$K$-vector space. For any $z = x+y\beta \in L$,
set $\Re z := x$ and $\Im z = y$. We have $x = (z+\sigma (z))/2$ and $y = (z -\sigma (z))/2\beta$. The $K$-linear maps $\Re : L\to K$ and $\Im : L\to K$ depends on the choice of $\beta$. Take any $M\in M_{n,n}(L)$ and set $M_+:= (M+M^\dagger )/2$ and $M_-:= (M-M^\dagger )/2\beta$. We obvious have $M = M_+\beta M_-$
and $M_+^\dagger =M_+$. Since $\sigma (1/2\beta ) =-1/(2\beta )$ we have $M_-^\dagger = M_-$. Remark \ref{u1} gives $\Nm (M_+)\subseteq K$ and $\Nm (M_-)\subseteq K$.
For any $u \in L^n$ we have $\langle u,Mu\rangle = \langle u,M_+u\rangle +\beta \langle u,M_-u\rangle$. Hence the maps $\Re$ and $\Im$
induces surjections $ \Nm (M)\to \Nm (M_+)$ and $\Nm (M) \to \Nm (M_-)$.

Now assume $\mathrm{char}(K)=2$. There is $\epsilon \in K$ such that
the polynomial $t^2+t+\epsilon$ is irreducible in $K$, while $L \cong K[t]/(t^2+t+\epsilon)$. We take as $\beta$ one of the roots in $L$ of $t^2+t+\epsilon$. Note that
$\beta +1$ is a
root of $t^2+t+\epsilon$. Thus $\sigma (\beta )=\beta +1$. We see $L =K+K\beta$ as a $2$-dimensional $K$-vector space and hence $L^n$ as a $2n$-dimensional $K$-vector space and $M_{n,n}(L)$ as a $2n^2$-dimensional
$K$-vector space. If $z=x+y\beta \in L$ with $x,y\in K$, then set $\Re z := x$ and $\Im z:= y$. We have $\sigma (z) = x+y +y\beta$ and hence $y= z+\sigma (z)$
and $x = z -\beta z -\beta \sigma (z) = \sigma (\beta )z + \beta \sigma (z)$. The maps $\Re :L\to K$ and $\Im :L\to K$ are $K$-linear. For any $M\in M_{n,n}(L)$ set $M_+:= (\beta +1)M +\beta M^\dagger$ and $M_-= M +M^\dagger$. Obviously $M_-$ is Hermitian. Since $2\beta =0$, we have
$M = M_+ +\beta M_-$. Since $\sigma (\beta +1)=\beta$ and $\sigma (\beta ) =\beta +1$, $M_+$ is Hermitian. Thus the map $z\mapsto \Re z$ (resp. $z\mapsto \Im z$)
induces surjections $\Nm (M)\to \Nm (M_+)\subseteq K$ (resp. $\Nm (M) \to \Nm (M_-)\subseteq K$).

\begin{lemma}\label{aa1}
Assume that $\langle \ ,\ \rangle$ is definite up to $n$. Take linearly independent $w_1,\dots ,w_m\in L^n$, $m\le n$. Then there
are $f_1,\dots ,f_n\in L^n$ such that $\langle f_i,f_i\rangle \ne 0$ for all $i$, $\langle f_i,f_j\rangle =0$ for all $i\ne j$ and $f_1,\dots ,f_m$ span the linear subspace spanned by $w_1,\dots ,w_m$. If $\Delta =\Delta _n$, then we may find $f_1,\dots ,f_n$ with the additional condition that $\langle f_i,f_i\rangle =1$.
\end{lemma}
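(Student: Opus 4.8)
The plan is to run the classical Gram--Schmidt process, using the hypothesis that $\langle\ ,\ \rangle$ is definite up to $n$ exactly at the point where the usual argument would have to worry about isotropic vectors. First I would extend $w_1,\dots ,w_m$ to a basis $w_1,\dots ,w_n$ of $L^n$ (possible since $m\le n$), and then define $f_1,\dots ,f_n$ recursively by $f_1:=w_1$ and, for $2\le k\le n$,
\[
f_k := w_k - \sum_{j=1}^{k-1}\frac{\langle f_j,w_k\rangle}{\langle f_j,f_j\rangle}\,f_j ,
\]
where the formula makes sense thanks to the inductive hypothesis that $\langle f_j,f_j\rangle\ne 0$ for $j<k$. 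By construction each $f_k$ is $w_k$ minus a linear combination of $f_1,\dots ,f_{k-1}$, hence of $w_1,\dots ,w_{k-1}$, so $f_1,\dots ,f_k$ and $w_1,\dots ,w_k$ span the same subspace; in particular $f_1,\dots ,f_m$ span the subspace spanned by $w_1,\dots ,w_m$.

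Next I would check, by induction on $k$, that $f_1,\dots ,f_k$ are pairwise orthogonal and that $\langle f_k,f_k\rangle\ne 0$. For the orthogonality: since $\langle\ ,\ \rangle$ is linear in the second variable, for $i<k$ we get $\langle f_i,f_k\rangle=\langle f_i,w_k\rangle-\sum_{j<k}\frac{\langle f_j,w_k\rangle}{\langle f_j,f_j\rangle}\langle f_i,f_j\rangle$, and by the inductive hypothesis $\langle f_i,f_j\rangle=0$ for $i\ne j$ with $i,j<k$, so only the $j=i$ term survives and $\langle f_i,f_k\rangle=0$. Because $\langle u,v\rangle=\sigma(\langle v,u\rangle)$, the relation $\langle f_i,f_k\rangle=0$ is symmetric in $i,k$, so $f_1,\dots ,f_k$ are pairwise orthogonal. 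For the nonvanishing: $f_k\ne 0$, since otherwise $w_k$ would lie in the span of $w_1,\dots ,w_{k-1}$, contradicting the linear independence of the basis $w_1,\dots ,w_n$; then, as $\langle\ ,\ \rangle$ is definite up to $n$ and $f_k\in L^n\setminus\{0\}$, we conclude $\langle f_k,f_k\rangle\ne 0$, which is what is needed to continue the recursion.

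Finally, assume $\Delta=\Delta_n$. Each $\langle f_i,f_i\rangle$ lies in $\Delta_n\setminus\{0\}=\Delta\setminus\{0\}$; since $\Delta\setminus\{0\}$ is a multiplicative group and coincides with the set of norms $\sigma(t)t$, $t\in L^\ast$ (same reasoning as in Remark \ref{pp1}), there is $t_i\in L^\ast$ with $t_i\sigma(t_i)=1/\langle f_i,f_i\rangle$. Replacing $f_i$ by $t_if_i$ leaves the orthogonality relations and the spanning property unchanged, and $\langle t_if_i,t_if_i\rangle=\sigma(t_i)t_i\langle f_i,f_i\rangle=1$, which gives the last assertion. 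The only (mild) obstacle is to ensure that no isotropic vector ever appears, so that every denominator in the recursion is invertible; this is precisely delivered by the definiteness hypothesis combined with the linear independence of the $w_i$, and the argument is uniform in $\mathrm{char}(K)$.
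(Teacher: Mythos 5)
Your proof is correct and follows essentially the same route as the paper: an inductive Gram--Schmidt orthogonalization in which the hypothesis that $\langle\ ,\ \rangle$ is definite up to $n$ guarantees that no isotropic vector (hence no vanishing denominator) can occur, and the hypothesis $\Delta=\Delta_n$ together with the multiplicativity of $\Delta\setminus\{0\}$ supplies the scalars needed to normalize. The only cosmetic difference is that the paper phrases the first part via orthogonal complement decompositions $L^n=W\oplus W^{\bot}$ and only writes the explicit Gram--Schmidt formula for the normalized case, whereas you use the explicit recursion throughout; your write-up is, if anything, cleaner.
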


\begin{proof}
We use induction on $m$. Call $W$ the linear span of $w_1,\dots ,w_{m-1}$; if $m-1>0$ we take $f_1,\dots ,f_{m-1}$ mutually orthogonal and spanning $W$.
Set $W^\bot =\{u\in L^n\mid \langle u,v\rangle =0$ for all $v\in W\}$. Since $W$ is non-degenerate, we
have $\dim W^\bot = n-m+1$. Since $\langle \ ,\ \rangle$ is definite positive up to $n$, we have $W\cap W^\bot =\{0\}$, i.e. $L^n = W\oplus W^\bot$ (orthogonal decomposition.
Write $f_m = u+v$ with $u\in W$ and $w\in W^\bot$. Since  $w_1,\dots ,w_m$ are linearly independent, we have $v\ne 0$.
Take $f_m=v$. If $m=n$, then we stop. Now assume $m<n$. Let $V$ be the linear span of $w_1,\dots ,w_m$. Set $V^\bot =\{u\in L^n\mid \langle u,v\rangle =0$ for all $v\in V\}$.
We again have $L^n = V\oplus V^\bot$ and we take as $f_{m+1}$ any non-zero element of $V^\bot$. If $m+1<n$ we continue using the orthogonal of the linear span
of $f_1,\dots ,f_{m+1}$. Now assume $\Delta = \Delta _n$. We assumed exactly the conditions for which the usual Gram-Schmidt orthonormal process works; for instance
if $f_1,\dots ,f_{m-1}$ are orthonormal, we have $v = w_m -\sum _{1}^{m-1} \langle w_m,f_i\rangle$; since $\Delta =\Delta _n$ and $\Delta _n\setminus \{0\}$ is a multiplicative group there is $t\in L$ such
that $t\sigma (t) = 1/\langle w,w\rangle$ and we take $f_m:= tv$.
\end{proof}

\begin{lemma}\label{aa4.0}
Assume $\Delta _2=\Delta$. Take $e, f\in L$ such that $e\ne f$ and take $a, b$ in the $\Delta$-convex hull $S$ of $e, f$. Then $S$ contains the $\Delta$-convex hull of $a, b$.
\end{lemma}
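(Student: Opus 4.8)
The plan is to reduce the claim to the definition of $\Delta$-convex hull by showing the $\Delta$-convex hull of two points inside the segment $\{te+(1-t)f : t\in\Delta\cap(1-\Delta)\}$ is itself a sub-segment of the same form, hence already contained in $S$. So first I would write $a = t_1 e + (1-t_1)f$ and $b = t_2 e + (1-t_2)f$ with $t_1,t_2\in\Delta\cap(1-\Delta)$, and without loss of generality assume $a\neq b$, so $t_1\neq t_2$. A general point of the $\Delta$-convex hull of $\{a,b\}$ has the form $sa+(1-s)b$ with $s\in\Delta\cap(1-\Delta)$. Substituting, this equals $\bigl(st_1+(1-s)t_2\bigr)e + \bigl(1-(st_1+(1-s)t_2)\bigr)f$, so it suffices to prove that the parameter $\tau := st_1+(1-s)t_2$ lies in $\Delta\cap(1-\Delta)$ whenever $s,t_1,t_2\in\Delta\cap(1-\Delta)$.

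The core of the argument is therefore a closure statement about the set $\Delta\cap(1-\Delta)$ under the operation $(s,t_1,t_2)\mapsto st_1+(1-s)t_2$. I would verify it by hand using the two structural facts available from the excerpt: $\Delta$ contains $0$ and all squares of elements of $K$ (more importantly all norms from $L$), and $\Delta\setminus\{0\}$ is a multiplicative group. Concretely: $st_1\in\Delta$ since $\Delta$ is closed under multiplication (handling the trivial case $s=0$ or $t_1=0$ separately, using $0\in\Delta$); likewise $(1-s)\in\Delta$ because $s\in 1-\Delta$, and $(1-s)t_2\in\Delta$; and finally the \emph{sum} $st_1+(1-s)t_2$ lies in $\Delta_2$. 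Here is where I invoke the hypothesis $\Delta_2=\Delta$: it upgrades this sum back into $\Delta$. The symmetric computation with $e$ and $f$ interchanged (writing $a=(1-t_1)f+t_1 e$, etc., and noting $1-\tau = s(1-t_1)+(1-s)(1-t_2)$ by the same manipulation) shows $1-\tau\in\Delta$ as well, so $\tau\in\Delta\cap(1-\Delta)$.

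The main obstacle — and the reason the hypothesis $\Delta_2=\Delta$ is essential rather than cosmetic — is precisely the step where $st_1+(1-s)t_2$ must be pulled back from $\Delta_2$ into $\Delta$; without it, iterating convex combinations escapes $\Delta$ into the strictly larger $\Delta_n$'s, and the $\Delta$-convex hull of a sub-segment need not be a sub-segment. One should also double-check the edge cases where some of $s, 1-s, t_1, t_2$ vanish, so that one of the two summands is $0$; then the sum lies in $\Delta_1=\Delta$ directly and no appeal to $\Delta_2=\Delta$ is needed, but it is cleanest to note $\Delta\subseteq\Delta_2$ always, so the argument is uniform. Once $\tau\in\Delta\cap(1-\Delta)$ is established, the point $sa+(1-s)b = \tau e + (1-\tau)f$ is visibly in the $\Delta$-convex hull $S$ of $\{e,f\}$, which completes the proof.
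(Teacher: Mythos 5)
Your proof is correct and follows essentially the same route as the paper: write $a,b$ as $\Delta$-convex combinations of $e,f$, expand $sa+(1-s)b$ as $\tau e+(1-\tau)f$ with $\tau=st_1+(1-s)t_2$, and use multiplicativity of $\Delta$ together with $\Delta_2=\Delta$ to place both $\tau$ and $1-\tau$ in $\Delta$. Your write-up is in fact slightly more careful than the paper's (you give the correct identity $1-\tau=s(1-t_1)+(1-s)(1-t_2)$, where the paper's displayed coefficient of $f$ contains a typo, and you note the degenerate cases where a factor vanishes).
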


\begin{proof}
Take $t_1, t_2, t\in (\Delta \cap (1-\Delta )$ such that $a =t_1e+(1-t_1)f$, $b= t_2e +(1-t_2)f$. We have $ta+(1-t)b = [tt_1 +(1-t)t_2]e +[(1-t)t_1+t(1-t_2)]f$.
Since $\Delta$ is multiplicative and $\Delta _2=\Delta$, we have $ [tt_1 +(1-t)t_2]\in \Delta$ and $[(1-t)t_1+t(1-t_2)]\in \Delta$. Hence $[(1-t)t_1+t(1-t_2)] \in (\Delta \cap (1-\Delta )$.
\end{proof}

If we need algebraic extensions of $L$ (e.g. because some of the eigenvalues of the matrix $M$ are not in $L$) the following set-up may be useful. Let $K$ be a perfect field (e.g. assume $\mathrm{char}(K)=0$). Fix an algebraic closure $\overline{L}$ of $L$ with a fixed inclusion $j: L\hookrightarrow \overline{L}$. The map $j\circ \sigma :L: \to \overline{L}$
extends to a field isomorphism $\sigma ': \overline{L} \to \overline{L}$ with $\sigma ' (x) =x$ for all $x\in K$ (\cite[Theorem V.2.8]{lang}). We fix one such $\sigma '$. For instance, if $K\subseteq \RR$, $L=K(i)$ and $\sigma$ is the complex conjugate, we may just take as $\sigma '$ the complex conjugate ; if $K=\FF _q$ and
$L =\FF _{q^2}$ with $\sigma$ the Frobenius map $t\mapsto t^q$ we may take as $\sigma ': \overline{\FF}_{q^2}\to \overline{\FF}_{q^2}$ the map $t\mapsto t^q$. As seen in the last
example $\sigma '^2$ may not be the identity (in this example $\FF _{q^2}$ is exactly the fixed point set of $\sigma '^2$ and $\sigma '$ has not finite order).
We fix one $\sigma '$ and use it to define
the $K$-bilinear map $\langle \ ,\ \rangle _{\sigma '}: \overline{L}^n\times \overline{L}^n\to \overline{L}$ by the formula
$\langle (u_1,\dots ,u_n),(v_1,\dots ,v_n)\rangle = \sum _{i=1}^{n} \sigma '(u_i)v_i$. We say that  $\langle \ ,\ \rangle _{\sigma '}$
is definite up to $n$ if $\langle u,u\rangle _{\sigma '} \ne 0$ for all $u\in \overline{L}^n\setminus \{0\}$. This is always the case if $K\subset \RR$, $L=K(i)$
and $\sigma$ and $\sigma '$ are induced by the complex conjugation.

\section{General results}\label{S2}

\begin{lemma}\label{b1.001}
Assume that either $\mathrm{char}(K) =0$ or $\mathrm{char}(K) \ne 2$ and there is $(w,\delta )$ such that $\delta \in (\Delta \cap (1-\Delta) \setminus \{0,1\}$, $w\in L\setminus K$, $w\sigma (w)=\delta$ and both $\delta $ and $1-\delta $ are squares in $K$.
Take $M\in M_{n,n}(L)$ such that $M = M^\dagger$ and $\sharp (\Nm (M)) =1$. Then $M = c\II _{n\times n}$ for some $c\in K$.
\end{lemma}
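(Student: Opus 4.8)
The plan is to show that $M$ acts as a scalar on $L^n$ by using the polarization identity together with the numerical-map values. Since $M = M^\dagger$, Remark \ref{u1} gives $\Nm(M)\subseteq K$, so write $\Nm(M) = \{c\}$ with $c\in K$; replacing $M$ by $M - c\II_{n\times n}$ (which is still Hermitian and has numerical range $\{0\}$) we reduce to the case $\nu_M(u) = \langle u,Mu\rangle = 0$ for all $u\in L^n$. The goal is then to deduce $M = 0$. First I would exploit the standard polarization trick: for any $u,v\in L^n$ and any $\lambda\in L$, expanding $\langle u+\lambda v, M(u+\lambda v)\rangle = 0$ and using $\nu_M(u)=\nu_M(\lambda v)=0$ yields $\sigma(\lambda)\langle v,Mu\rangle + \lambda\langle u,Mv\rangle = 0$. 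Since $M=M^\dagger$, Remark \ref{u1} gives $\langle u,Mv\rangle = \sigma(\langle v,Mu\rangle)$, so setting $z := \langle v,Mu\rangle$ the relation becomes $\sigma(\lambda) z + \lambda\sigma(z) = 0$, i.e. $\operatorname{Tr}_{L/K}(\lambda\sigma(z)) = 0$ for every $\lambda\in L$.

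The key step is then to argue that the trace form $\operatorname{Tr}_{L/K}\colon L\to K$ is nondegenerate — which holds precisely because $L/K$ is separable, and this is where the hypothesis $\mathrm{char}(K)=0$ or $\mathrm{char}(K)\neq 2$ (so that the degree-$2$ extension $L/K$ is separable, $L = K[t]/(t^2-\alpha)$ with $\operatorname{Tr}(x+y\beta) = 2x$) enters. Nondegeneracy of $\operatorname{Tr}_{L/K}$ forces $\sigma(z) = 0$, hence $z = \langle v,Mu\rangle = 0$ for all $u,v\in L^n$. Taking $v$ to run over a basis and $u = $ each standard basis vector shows every entry of $M$ is $0$, so $M = 0$, and undoing the reduction gives $M = c\II_{n\times n}$ with $c\in K$.

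I expect the main subtlety to be two-fold. First, one must double-check that the scalars $\lambda$ can indeed be chosen freely in $L$ and that the polarization identity is being applied correctly with the $\sigma$-linearity in the first slot (this is why the factor is $\sigma(\lambda)$ rather than $\lambda$ on one term); no positivity or ordering is needed here, only the algebraic identities. Second, and more importantly, I suspect the role of the technical hypothesis about $(w,\delta)$ with $\delta, 1-\delta$ squares in $K$ is *not* needed for the trace argument above and may instead be an artifact of an alternative proof the author has in mind — perhaps one that builds an explicit rank-one perturbation or uses a $2\times 2$ restriction of $M$ to a suitable plane and needs a unit vector there (cf. obstacle (3)). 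If the author's route goes through restricting to a $2$-dimensional subspace spanned by $u$ and $Mu$ and normalizing, then the existence of $w$ with $w\sigma(w)=\delta$ and the square conditions are exactly what is needed to realize the relevant convex combination or to exhibit a genuine off-diagonal contribution to $\Nm(M)$ — so the hard part will be matching the hypothesis to whichever of these two strategies is actually used, and I would lead with the clean trace-nondegeneracy argument and only invoke the $(w,\delta)$ data if the Hermitian reduction via Remark \ref{u1} turns out to be unavailable in the intended generality.
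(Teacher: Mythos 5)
Your reduction to ``$\nu_M(u)=0$ for all $u\in L^n$'' is where the argument breaks down. The numerical range $\Nm(M)$ is by definition the image of $\nu_M$ restricted to $C_n(1)=\{z\in L^n\mid \langle z,z\rangle =1\}$, not of $\nu_M$ on all of $L^n$. So $\sharp(\Nm(M))=1$ only tells you $\langle u,Mu\rangle =c$ for \emph{unit} vectors $u$; after subtracting $c\II_{n\times n}$ you know $\langle u,Mu\rangle=0$ only on $C_n(1)$ (and, by rescaling, on vectors with $\langle u,u\rangle\in\Delta\setminus\{0\}$). The polarization step then fails: for unit vectors $u,v$ the vector $u+\lambda v$ is almost never a unit vector, and in general it cannot be renormalized, since $\langle u+\lambda v,u+\lambda v\rangle$ may be $0$ or may lie in $\Delta_2\setminus\Delta$ --- these are exactly obstacles (2) and (3) listed in the introduction. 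Consequently you never obtain the identity $\sigma(\lambda)z+\lambda\sigma(z)=0$ for \emph{all} $\lambda\in L$, and the trace-nondegeneracy conclusion $z=0$ does not follow.

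This also answers your guess about the hypothesis on $(w,\delta)$: it is not an artifact of an alternative proof, it is what replaces the unavailable polarization. The paper's proof reduces to $c=0$, notes that the diagonal entries vanish because each $e_i$ is a unit vector, and then, assuming some $m_{ij}\neq 0$, tests $M$ only against unit vectors $u=xe_i+ye_j$ with $x\sigma (x)+y\sigma (y)=1$, obtaining $e+\sigma(e)=0$ for $e=m_{ij}\sigma(x)y$. Choosing $x,y\in K$ with $x^2=\delta$ and $y^2=1-\delta$ (this is why $\delta$ and $1-\delta$ must be squares) forces $m_{ij}\in K\beta$; choosing instead $x=w\notin K$ with $w\sigma(w)=\delta$ (this is why $w$ must exist) then gives $\sigma(w)y\notin K$, a contradiction. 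In characteristic $0$ the paper simply quotes \cite[Proposition 1]{bal}. To salvage your route you would first have to prove that $\nu_M$ vanishes on all of $L^n$, which is precisely the nontrivial content of the lemma.
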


\begin{proof}
If $M$ is Hermitian, then $\Nm (M)\subseteq K$ by Remark \ref{u1}. Assume $\Nm (M) =\{c\}$ for some $c$. If $\mathrm{char}(K) =0$, then apply \cite[Proposition 1]{bal}.
Hence we may assume the existence of $(w,\delta)$. Since $c\in K$, $M-c\II _{n\times n}$ is Hermitian. Thus taking  $M-c\II _{n\times n}$ instead of $M$ we reduce to the case
$c=0$. Write $M =(m_{ij})$, $i,j=1,\dots ,n$. Since $m_{ii} = \langle e_i,Me_i\rangle$, all the diagonal elements of $M$ are zeroes. Assume the existence of $1\le i < j\le n$
with $m_{ij} \ne 0$. We have $m_{ji}= \sigma (m_{ij})$. Take $u = xe_i+ye_j$ with $x\sigma (x) +y\sigma (y)=1$. Since $m_{ii} = m_{jj} =0$, we have
$0 = \langle u,Mu\rangle = m_{ij}\sigma (x)y + \sigma (m_{ij})x\sigma (y) =0$. Hence $e+\sigma (e)  =0$, where $e:= m_{ij}\sigma (x)y$. Since $\mathrm{char}(K) \ne 2$,
there is $\alpha \in K$ such that $e = \alpha\beta$. Take $x, y\in K$ with $x^2=\delta $ and $y^2 =1-\delta$. Since $\delta \notin \{0,1\}$, we have
$xy\ne 0$. Since $e\in K\beta$, we get $m_{ij} =k\beta$ for some $k\in K^\ast$. Thus $\sigma (x)y \in K$ for all $(x,y)\in L^2$ with $x\sigma (x)+y\sigma (y) =1$.
Taking $x=w$ and $y\in K$ with $y^2=1-\delta$ we get a contradiction.\end{proof}

\begin{proposition}\label{b1.00}
Assume that either $\mathrm{char}(K) =0$ or $\mathrm{char}(K) \ne 2$ and there is $(w,\delta )$ such that $\delta \in (\Delta \cap (1-\Delta) \setminus \{0,1\}$, $w\in L\setminus K$, $w\sigma (w)=\delta$ and both $\delta $ and $1-\delta $ are squares in $K$. A matrix $M\in M_{n,n}(L)$ is Hermitian if and only if $\Nm (M)\subseteq K$.
\end{proposition}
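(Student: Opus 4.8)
The plan is to prove the two implications separately, with the nontrivial one reducing cleanly to the lemma just proved. For the easy direction, if $M = M^\dagger$ then Remark \ref{u1} immediately gives $\Nm(M) \subseteq K$, so nothing more is needed there.

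For the converse, suppose $\Nm(M) \subseteq K$ and write $M = M_+ + \beta M_-$ as in the preliminaries (using $\mathrm{char}(K) \ne 2$, which is guaranteed by the hypothesis), where $M_+ = (M + M^\dagger)/2$ and $M_- = (M - M^\dagger)/2\beta$ are both Hermitian. For any $u \in L^n$ we have $\langle u, Mu\rangle = \langle u, M_+ u\rangle + \beta \langle u, M_- u\rangle$ with $\langle u, M_+ u\rangle, \langle u, M_- u\rangle \in K$; since $\beta \notin K$ and $\{1,\beta\}$ is a $K$-basis of $L$, the hypothesis $\langle u, Mu\rangle \in K$ forces $\langle u, M_- u\rangle = 0$ for every $u$. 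In other words $\Nm(M_-) = \{0\}$, a singleton. I would then apply Lemma \ref{b1.001} to the Hermitian matrix $M_-$ (the hypotheses of the Proposition are exactly those of the Lemma, so the pair $(w,\delta)$ is available when needed) to conclude $M_- = c\,\II_{n\times n}$ for some $c \in K$; but $\Nm(M_-) = \{0\}$ forces $c = 0$, hence $M_- = 0$ and $M = M_+ = M^\dagger$.

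One point to check carefully is that $\Nm(M_-) = \{0\}$ rather than merely $0 \in \Nm(M_-)$: this is where the surjectivity statement from the preliminaries (the map $\Im$ induces a surjection $\Nm(M) \to \Nm(M_-)$) does the work, together with the observation that $\Nm(M) \subseteq K$ means the $\beta$-component of every value of $\nu_M$ vanishes, i.e.\ $\Im(\langle u, Mu\rangle) = \langle u, M_- u\rangle = 0$ identically. The main (and really only) obstacle is that Lemma \ref{b1.001} is stated for $\mathrm{char}(K) = 0$ \emph{or} for $\mathrm{char}(K) \ne 2$ with the extra data $(w,\delta)$; since the Proposition carries exactly the same disjunctive hypothesis, this matches up with no friction, and the characteristic-$2$ case is simply excluded from both statements. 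So the proof is essentially a two-line reduction once Lemma \ref{b1.001} is in hand.
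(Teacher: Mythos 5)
Your proof is correct and follows essentially the same route as the paper: the easy direction via Remark \ref{u1}, and the converse by decomposing $M = M_+ + \beta M_-$, observing that $\Nm(M)\subseteq K$ forces $\Nm(M_-)=\{0\}$ (via the surjection induced by $\Im$), and then invoking Lemma \ref{b1.001} on the Hermitian matrix $M_-$. Your version merely spells out the intermediate steps (why $\langle u, M_-u\rangle$ vanishes identically, and why $c=0$) that the paper leaves implicit.
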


\begin{proof}
If $M$ is Hermitian, then $\Nm (M)\subseteq K$ by Remark \ref{u1}. For an arbitrary $M\in M_{n,n}(L)$ write $M = M_++\beta M_-$. $M$ is Hermitian if and only if
$M_-=0\II _{n\times n}$. Since the map $\Im : \Nm (M)\to \Im (M_-)$ is surjective, $\Nm (M)\subseteq K$ if and only if $\Nm (M_- )=\{0\}$. Apply Lemma \ref{b1.001}.
\end{proof}

\begin{example}\label{b1.02}
Take $K=\FF _2$, $L=\FF _4$ and as $\sigma$ the Frobenius map $t\to t^2$. If $u= (x,y)\in L^2$ we have $\langle u,u\rangle =1$ if and only if $x^3+y^3=1$,
i.e. (since $z^3=1 $ if $z\in \FF _4^\ast$) if and only if $(x,y)\in \{(0,1),(1,0)\}$. Hence for each $M\in M_{2,2}(\FF _4)$ the numerical range $\Nm (M)$ is the set of all diagonal
elements of $M$. Hence there are $16$ matrices $M\in M_{2,2}(\FF _4)$ with $\Nm (M)=\{0\}$ and $4$ of them are Hermitian. This is the only example we know (and the only one for finite fields).
\end{example}

\begin{proof}[Proof of Theorem \ref{d2}:]
Taking a multiple of $u$ and $v$ instead of $u$ and $v$ and applying Remark \ref{pp1} we reduce to the case
$ \langle u,u\rangle = \langle v,v\rangle =1$. Since $a\ne b$, $u$ and $v$ are linearly independent. Taking $\frac{1}{b-a}(M-a\II _{n\times n})$ instead of $M$
we reduce to the case $a=0$ and $b=1$. Thus $u\ne 0$, $Mu =0$ and $Mv=v$.

\quad (a) First assume assume $\langle u,v\rangle =0$ and hence $\langle v,u\rangle =0$. Fix $\delta \in \Delta \cap (1-\Delta )$
and write $\delta =y\sigma (y)$ and $1-\delta =x\sigma(x)$ for some $x,y\in L$. Take $m= xu+yv$. We have $\langle m,m\rangle =1$, because $x\sigma (x)+
y\sigma (y) =1$. We have $Mm = yv$ and hence $\langle m,Mm \rangle =\sigma (y)y =\delta$.

\quad (b) Now assume $d:= \langle u,v\rangle \ne 0$. Set $w:= v -\langle v,u\rangle u$. Since $u, w$ and $u, v$ have the same linear span, we have $w\ne 0$.
Since $\langle \ ,\ \rangle$ is non-degenerate up to $n$, we have $c:= \langle w,w\rangle \in \Delta _n\setminus \{0\}$. We have $\langle w,u\rangle =0$.
Take $m = xu+yw$. We have $\langle m,m\rangle =1$ if and only if $\sigma (x)x +cy\sigma (y) =1$. We have $Mm = yv = yw + ydu$
and hence $\langle m,Mm \rangle =  d^2\sigma (x)y + c\sigma (y)y$. The latter set is a $\Delta _n$-ellipse with $2$ foci.
\end{proof}

\begin{example}\label{d3}
Assume that $\langle \ ,\ \rangle $ is definite up to $2$, but $\Delta _2\ne \Delta$ and fix $\delta \in \Delta _2\setminus \Delta$. We may take $K=\QQ$, $L = \QQ (i)$
and $\sigma$ the complex conjugate (\cite[Example 2]{bal}). Note that $\delta \ne 0$. Fix $u\in L^2$ such that $\langle u,u\rangle =\delta$.
Since $\langle \ ,\ \rangle$ is non-degenerate, there is $m\in L^2$ such that $\langle m,u\rangle =0$ and $m\ne 0$. Let $M\in M_{2,2}(L)$ be the only matrix with
$Mu =0$ and $Mm=m$. Hence $0$ and $1$ are the eigenvalues of $M$. Set $\delta _1:= \langle m,m\rangle$. We claim that $0\notin \Nm (M)$. Take $v =xm+yu$. Since $Mv =xm$ and $m, u$ are orthogonal, we have $\langle v,Mv\rangle =0$
if and only if $x=0$. We have $\langle v,v \rangle = \sigma (x)x\delta \ne 1$ (Remark \ref{pp1}). Now
we check that $1\notin \Nm (M)$. We have $\langle v,v\rangle =1$ if and only if $\delta _1 x\sigma (x) +\delta y\sigma (1)=1$. We have $\langle v,Mv\rangle =1$
if and only if $\delta _1x\sigma (x) =1$, i.e. if and only if $y=0$. Hence $1\in \Nm (M)$ if and only if there is $x\in L$ with $\delta _1 x\sigma (x) =1$, i.e.
if and only if $\langle f_1,f_1\rangle =1$, where $f_1 =xm$. Write $f_1=g_1e_1+g_2e_2$ and set $f_2:= g_2e_1+g_1e_1$. We get $\langle f_2,f_2\rangle =1$
and $\langle f_2,f_1\rangle =0$. Hence there is $t\in L^\ast$ with $u=tf_2$. We get $\delta =t\sigma (t)\in \delta$, a contradiction.
This situation is satisfied if we take $K=\QQ$, $L=\QQ (i)$, $\sigma$ and $\sigma '$ the complex conjugate. Note that in this case ${\sigma '}^2$ is the identity
and $\langle \ ,\ \rangle _{\sigma '}$ is definite for all $n$. Hence we do not see a reasonable way of weaking the assumptions in Theorem \ref{d2}.
\end{example}

\begin{lemma}\label{b2}
Fix an integer $n\ge 2$ and assume that $\langle \ ,\ \rangle _{\sigma '}$ is definite up to $n$ and take $M\in M_{n,n}(L)$ such that $M^\dagger =M$. Let $c\in \overline{L}$
be any eigenvalue of $M$. Then $\sigma '({c}) =c$.
\end{lemma}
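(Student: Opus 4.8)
\textbf{Proof plan for Lemma \ref{b2}.} The plan is to mimic the classical fact that Hermitian matrices have real eigenvalues, but carried out over $\overline{L}$ with the extended form $\langle\ ,\ \rangle_{\sigma'}$. Let $c\in\overline{L}$ be an eigenvalue of $M$, and pick a nonzero eigenvector $z\in\overline{L}^n$ with $Mz=cz$. First I would compute $\langle z,Mz\rangle_{\sigma'}$ in two ways. On the one hand, $\langle z,Mz\rangle_{\sigma'}=\langle z,cz\rangle_{\sigma'}=c\langle z,z\rangle_{\sigma'}$, since the form is linear in the second variable. On the other hand, using the adjoint identity (the analogue of Remark \ref{u1}, which holds verbatim for $\langle\ ,\ \rangle_{\sigma'}$ because it only uses sesquilinearity and $M^\dagger=M$), we have $\langle z,Mz\rangle_{\sigma'}=\langle M^\dagger z,z\rangle_{\sigma'}=\langle Mz,z\rangle_{\sigma'}=\langle cz,z\rangle_{\sigma'}=\sigma'(c)\langle z,z\rangle_{\sigma'}$.

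Subtracting, $(c-\sigma'(c))\langle z,z\rangle_{\sigma'}=0$. Since $\langle\ ,\ \rangle_{\sigma'}$ is definite up to $n$ and $z\ne 0$, we have $\langle z,z\rangle_{\sigma'}\ne 0$, hence $c=\sigma'(c)$, which is the claim.

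The one point that needs care, and which I expect to be the only real obstacle, is justifying that $M^\dagger=M$ (a relation of matrices over $L$) still gives the adjoint identity $\langle u,Mv\rangle_{\sigma'}=\langle Mu,v\rangle_{\sigma'}$ when $u,v$ range over $\overline{L}^n$ and the conjugation used is $\sigma'$ rather than $\sigma$. This is fine: the identity $\langle u,Mv\rangle_{\sigma'}=\langle M^{\dagger_{\sigma'}}u,v\rangle_{\sigma'}$ holds formally for any matrix, where $M^{\dagger_{\sigma'}}$ denotes the conjugate transpose formed with $\sigma'$; and since the entries of $M$ lie in $L$ and $\sigma'$ restricts to $\sigma$ on $L$, we have $M^{\dagger_{\sigma'}}=M^\dagger=M$. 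So the argument goes through with no extra hypotheses. (Note we do not need $\sigma'$ to be an involution — indeed the excerpt stresses that ${\sigma'}^2$ may fail to be the identity — since only a single application of $\sigma'$ and the $\sigma'$-linearity in the first slot are used.)

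Finally, I would remark that this lemma is exactly what is needed to push eigenvalue arguments (such as those behind Theorem \ref{d2} or the ellipse propositions) into an algebraic closure while still knowing that Hermitian matrices behave as expected: their spectrum lies in the fixed field of $\sigma'$, which plays the role of "the reals" inside $\overline{L}$.
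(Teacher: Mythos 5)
Your proof is correct and follows essentially the same route as the paper: take a nonzero eigenvector $u\in\overline{L}^n$, compute $\langle u,Mu\rangle_{\sigma'}$ two ways using $M^\dagger=M$ to get $c\langle u,u\rangle_{\sigma'}=\sigma'(c)\langle u,u\rangle_{\sigma'}$, and cancel $\langle u,u\rangle_{\sigma'}\ne 0$ by definiteness. Your extra care in checking that the adjoint identity persists over $\overline{L}$ with $\sigma'$ (and that no involutivity of $\sigma'$ is needed) is a point the paper leaves implicit, but the argument is the same.
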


\begin{proof}
Take $u\in \overline{L}^n$, $u\ne 0$, such that $Mu = cu$.
We have $c \langle u,u\rangle = \langle u,Mu\rangle = \langle Mu,u\rangle =\sigma '({c})\langle u,u\rangle$. 
\end{proof}

\begin{lemma}\label{b3}
Fix an integer $n\ge 2$ and assume that $\langle \ ,\ \rangle _{\sigma '}$ is definite up to $n$ and take $M\in M_{n,n}(L)$ such that $M^\dagger =M$.
Take eigenvalues $c, d\in \overline{L}$ of $M$ such that $c\ne d$ and any $u, v\in \overline{L}$ such that $Mu =cu$ and $Mv =dv$.
Then $\langle u,v\rangle _{\sigma '} =0$.
\end{lemma}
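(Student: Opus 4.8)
The plan is to carry out the classical argument (eigenvectors of a Hermitian operator for distinct eigenvalues are orthogonal), now for the form $\langle\ ,\ \rangle_{\sigma'}$ on $\overline{L}^n$, using Lemma \ref{b2} to replace $\sigma'(c)$ by $c$ at the decisive moment.

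First I would record the adjoint identity over $\overline{L}$: for $M\in M_{n,n}(L)$ and $u,v\in\overline{L}^n$ one has $\langle u,Mv\rangle_{\sigma'}=\langle M^\dagger u,v\rangle_{\sigma'}$. This is the $\overline{L}$-analogue of Remark \ref{u1}, proved by the same index computation: writing both sides out, the right-hand side involves the entries $(M^\dagger)_{ji}=\sigma(M_{ij})\in L$, to which $\sigma'$ is applied; since $\sigma'$ extends $j\circ\sigma$, we have $\sigma'(x)=\sigma(x)$ for $x\in L$, hence $\sigma'(\sigma(M_{ij}))=\sigma^2(M_{ij})=M_{ij}$, which is exactly what matches the two expansions. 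Note also that $\langle\ ,\ \rangle_{\sigma'}$ is $\overline{L}$-linear in the second slot and $\sigma'$-semilinear in the first, directly from its defining formula.

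Next, with $Mu=cu$ and $Mv=dv$, I would evaluate $\langle u,Mv\rangle_{\sigma'}$ two ways. On one hand $\langle u,Mv\rangle_{\sigma'}=\langle u,dv\rangle_{\sigma'}=d\,\langle u,v\rangle_{\sigma'}$ by linearity in the second variable. On the other hand, by the adjoint identity and $M^\dagger=M$,
\[
\langle u,Mv\rangle_{\sigma'}=\langle Mu,v\rangle_{\sigma'}=\langle cu,v\rangle_{\sigma'}=\sigma'(c)\,\langle u,v\rangle_{\sigma'}.
\]
By Lemma \ref{b2} (applicable since $\langle\ ,\ \rangle_{\sigma'}$ is definite up to $n$ and $M^\dagger=M$), we have $\sigma'(c)=c$, so comparing the two values gives $(d-c)\langle u,v\rangle_{\sigma'}=0$. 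As $c\ne d$ and $\overline{L}$ is a field, we conclude $\langle u,v\rangle_{\sigma'}=0$.

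There is no serious obstacle here; the one point deserving a moment's care is the first step, namely checking that the adjoint formula of Remark \ref{u1} survives the passage from $(L,\sigma)$ to $(\overline{L},\sigma')$, and this is guaranteed precisely by the choice of $\sigma'$ as an extension of $j\circ\sigma$. The definiteness hypothesis enters only indirectly, through the appeal to Lemma \ref{b2}.
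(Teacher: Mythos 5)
Your proof is correct and follows the same route as the paper: compute $\langle u,Mv\rangle_{\sigma'}$ in two ways using $M^\dagger=M$, invoke Lemma \ref{b2} to get $\sigma'(c)=c$, and conclude from $c\ne d$. The only addition is your explicit verification that the adjoint identity of Remark \ref{u1} persists over $\overline{L}$ with $\sigma'$, which the paper uses implicitly; that check is accurate.
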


\begin{proof}
We have $d \langle u,v\rangle_{\sigma '} = \langle u,Mv\rangle_{\sigma '} = \langle Mu,v\rangle_{\sigma '} =\sigma '({c})\langle u,v\rangle_{\sigma '}$. Lemma \ref{b2} gives $\sigma '({c}) = c\ne d$.
\end{proof}

\begin{remark}\label{bb1}
Take $A\in M_{n,n}(L)$, $B\in M_{m,m}(L)$ and set $M:= A\oplus B\in M_{n+m,n+m}(L)$. $\Nm (M)$ is the $\Delta$-convex hull
of $\Nm (A)$ and $\Nm (B)$ (\cite[Lemma 7]{bal}.
\end{remark}

\begin{lemma}\label{aa5}
Take $n=2$ and fix $b\in L^\ast$. Take
\[M = \begin{pmatrix}
0 & b \\
0 & 1
\end{pmatrix}\]
Then $\Nm (M)$ is the ellipse with foci at $\{0,1\}$ and parameters $(\delta _1,\delta _2, d_1,d_2) =(1,1,b,1)$.
\end{lemma}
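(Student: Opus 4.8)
The plan is to prove this by directly parametrizing $\Nm(M)$ and then matching the resulting description term-by-term with Definition \ref{d1}. First I would note that a vector $u\in L^2$ lies in $C_2(1)$ exactly when $u=(x,y)$ with $x\sigma(x)+y\sigma(y)=1$, and that for such $u$ one has $Mu=(by,y)$, hence
\[
\nu_M(u)=\langle u,Mu\rangle=\sigma(x)\cdot(by)+\sigma(y)\cdot y=b\,y\sigma(x)+y\sigma(y),
\]
using the commutativity of $L$ to rewrite $b\sigma(x)y=b\,y\sigma(x)$. Consequently
\[
\Nm(M)=\{\,b\,y\sigma(x)+y\sigma(y)\mid (x,y)\in L^2,\ x\sigma(x)+y\sigma(y)=1\,\}.
\]

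Next I would check that the parameter tuple $(\delta_1,\delta_2,d_1,d_2)=(1,1,b,1)$ is legitimate: we have $1=\langle 1,1\rangle\in\Delta\setminus\{0\}\subseteq\Delta_2\setminus\{0\}$, and $b\in L^\ast$ by hypothesis, so all four parameters meet the requirements in Definition \ref{d1}. With these parameters the defining set of the $\Delta_2$-ellipse with two foci is \emph{verbatim} the displayed description of $\Nm(M)$ above: the constraint $\delta_1 x\sigma(x)+\delta_2 y\sigma(y)=1$ collapses to $x\sigma(x)+y\sigma(y)=1$, and the value $d_1 y\sigma(x)+d_2 y\sigma(y)$ collapses to $b\,y\sigma(x)+y\sigma(y)$. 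Since the two index sets are literally identical, $\Nm(M)$ equals that ellipse with no case distinction and with no definiteness hypothesis on the form required (consistent with the lemma having none).

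Finally, to account for the phrase ``foci at $\{0,1\}$'', I would specialize the parametrization at the two degenerate vectors: taking $y=0$ forces $x\sigma(x)=1$ and produces the value $0$, while taking $x=0$ forces $y\sigma(y)=1$ and produces the value $1$; thus $0,1\in\Nm(M)$, and these are exactly the two eigenvalues of the upper triangular matrix $M$, so they play the role of foci just as in the classical Toeplitz--Hausdorff picture for $2\times2$ matrices. There is essentially no obstacle here — the whole argument is an unwinding of Definition \ref{d1} — and the only points needing a word of care are verifying $1\in\Delta$ so that the tuple $(1,1,b,1)$ is admissible, and pinning down what ``foci'' means, which the specialization above settles. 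This computation is the prototype that will feed into the $n=2$ statements, Propositions \ref{aa6} and \ref{aa7}.
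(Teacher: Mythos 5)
Your proof is correct and follows essentially the same route as the paper, which simply computes $\langle u,u\rangle$ and $\langle u,Mu\rangle$ for $u=xe_1+ye_2$ and matches the result against Definition~\ref{d1} with parameters $(1,1,b,1)$. The extra checks you add (admissibility of the parameter tuple and the identification of $0,1$ as the values at the degenerate vectors) are harmless elaborations of the same computation.
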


\begin{proof}
Take $u=xe_1+ye_2$. We have $\langle u,u\rangle =1$ if and only if $x\sigma (x)+y\sigma (y)=1$ and $\langle u,Mu\rangle = bx\sigma (y) +y\sigma (y)$.
\end{proof}

\begin{lemma}\label{aa5.0}
Take $n=2$ and the matrix
\[M = \begin{pmatrix}
0 & 1 \\
0 & 0
\end{pmatrix}\]
Then $\Nm (M)$ is the ellipse with one focus at $0$ and $\delta _1=\delta _2=1$.
\end{lemma}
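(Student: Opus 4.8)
The plan is to argue exactly as in the proof of Lemma \ref{aa5}, by a direct computation against Definition \ref{d1}. Recall that $\Nm(M)$ is by definition the image of $\nu_M$ restricted to $C_2(1) = \{u\in L^2 \mid \langle u,u\rangle =1\}$, so it suffices to compute $\langle u, Mu\rangle$ for a general $u\in C_2(1)$ and recognize the resulting set.

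First I would write $u = xe_1 + ye_2$ with $x,y\in L$; then $\langle u,u\rangle =1$ is precisely the condition $x\sigma(x)+y\sigma(y)=1$. Next I would carry out the matrix multiplication: since $M$ sends the column $(x,y)^{t}$ to $(y,0)^{t}$, we have $Mu = y e_1$. Hence, using the formula $\langle u,v\rangle = \sum_i \sigma(u_i)v_i$, we get $\langle u, Mu\rangle = \sigma(x)\,y + \sigma(y)\cdot 0 = \sigma(x)y$. Therefore
\[
\Nm(M) = \{\sigma(x)y \mid (x,y)\in L^2,\ x\sigma(x)+y\sigma(y)=1\}.
\]

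To finish, I would observe that $1\in\Delta\setminus\{0\}$, since $1 = \sigma(1)\cdot 1 = \langle 1,1\rangle$ with $1\in L$, so $(\delta_1,\delta_2)=(1,1)$ are admissible parameters (and a fortiori lie in $\Delta_n\setminus\{0\}$). Then the displayed description of $\Nm(M)$ is literally the set $\{\sigma(x)y \mid (x,y)\in L^2,\ \delta_1 x\sigma(x)+\delta_2 y\sigma(y)=1\}$ of Definition \ref{d1} with $\delta_1=\delta_2=1$, i.e. $\Nm(M)$ is the ($\Delta$-)ellipse with a unique focus at $0$ and $\delta_1=\delta_2=1$ (no affine shift is needed, i.e. one takes $a=0$, $b=1$ in the last clause of the definition). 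There is no real obstacle here: the only points requiring a line of justification are the identity $Mu = ye_1$ and the remark that $1\in\Delta\setminus\{0\}$, both of which are immediate; in particular no case distinction on $\mathrm{char}(K)$ is needed since the computation never uses the explicit form of $\beta$.
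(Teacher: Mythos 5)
Your computation is exactly the paper's proof: write $u=xe_1+ye_2$, note $\langle u,u\rangle=1$ iff $x\sigma(x)+y\sigma(y)=1$, and compute $\langle u,Mu\rangle=\sigma(x)y$, which matches Definition \ref{d1} with $\delta_1=\delta_2=1$. The extra remarks (that $Mu=ye_1$ and that $1\in\Delta\setminus\{0\}$) are correct and only make explicit what the paper leaves implicit.
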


\begin{proof}
Take $u=xe_1+ye_2$. We have $\langle u,u\rangle =1$ if and only if $x\sigma (x)+y\sigma (y)=1$ and $\langle u,Mu\rangle = \sigma (x)y$.
\end{proof}

\begin{proposition}\label{aa6}
Assume that $\langle \ ,\ \rangle$ is definite up to $2$ and take $M\in M_{2,2}(L)$ with two different eigenvalues, both in $L$. Assume either $\Delta _2=\Delta$ or that
$M$ has at least one eigenvector $u$ with $\langle u,u\rangle \in \Delta$. Then $\Nm (M)$ is an ellipse with two foci or the $\Delta$-convex hull of its eigenvalues.
\end{proposition}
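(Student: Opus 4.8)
The plan is to diagonalize $M$ over $L$, normalize one eigenvector, and then distinguish two cases according to whether the two eigenvectors are orthogonal. Write $a\ne b$ for the two eigenvalues of $M$, both in $L$ by hypothesis, and choose $u,v\in L^2$ with $Mu=au$ and $Mv=bv$. Since $a\ne b$ they are linearly independent, hence a basis of $L^2$, and since $\langle\ ,\ \rangle$ is definite up to $2$ we have $\langle u,u\rangle,\langle v,v\rangle\in\Delta_2\setminus\{0\}$. First I would arrange that $\langle u,u\rangle\in\Delta\setminus\{0\}$: if $\Delta_2=\Delta$ this is automatic, while if $M$ has an eigenvector $u_0$ with $\langle u_0,u_0\rangle\in\Delta$ we relabel $a$ and $b$ so that $u_0$ is an eigenvector for $a$ and take $u:=u_0$ (the conclusion is symmetric in $a,b$), choosing then any eigenvector $v$ for $b$. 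Rescaling $u$ by a $t\in L$ with $t\sigma(t)=1/\langle u,u\rangle$ — possible because $\Delta\setminus\{0\}$ is a multiplicative group — we may assume $\langle u,u\rangle=1$.

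The technical core, and what makes the second hypothesis usable, is the following remark about the form on $L^2$: \emph{if $z,z'\in L^2$ and $\langle z,z'\rangle=0$, then $\langle z,z\rangle\,\langle z',z'\rangle\in\Delta$; in particular, if also $\langle z',z'\rangle\in\Delta\setminus\{0\}$, then $\langle z,z\rangle\in\Delta$.} When $z,z'$ are linearly dependent this is clear from $\langle z,z\rangle=t\sigma(t)\langle z',z'\rangle$. When they are independent, let $P\in GL_2(L)$ have columns the coordinates of $z,z'$ in the standard basis; since the standard basis is orthonormal for $\langle\ ,\ \rangle$, the Gram matrix of $(z,z')$ equals $\sigma(P)^{\mathrm{t}}P$ (with $\sigma$ applied entrywise), so comparing determinants and using $\langle z,z'\rangle=\langle z',z\rangle=0$ gives $\langle z,z\rangle\,\langle z',z'\rangle=\sigma(\det P)\det P\in\Delta$; the last assertion follows since $\Delta\setminus\{0\}$ is a group. (This step uses $n=2$.)

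Suppose first $\langle u,v\rangle=0$. Applying the remark to $(z,z')=(v,u)$ gives $\langle v,v\rangle\in\Delta\setminus\{0\}$, so after rescaling $v$ we may also assume $\langle v,v\rangle=1$. Writing a general vector of $L^2$ as $m=xu+yv$, a direct computation gives $\langle m,m\rangle=x\sigma(x)+y\sigma(y)$ and, since $Mm=axu+byv$, $\langle m,Mm\rangle=a\,x\sigma(x)+b\,y\sigma(y)$. On the locus $\langle m,m\rangle=1$ we have $y\sigma(y)=1-x\sigma(x)$ with $x\sigma(x)$ ranging over $\Delta\cap(1-\Delta)$, so $\Nm(M)=\{as+b(1-s)\mid s\in\Delta\cap(1-\Delta)\}$, which is exactly the $\Delta$-convex hull of $\{a,b\}$.

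Finally suppose $\langle u,v\rangle\ne 0$; put $e:=\langle v,u\rangle=\sigma(\langle u,v\rangle)\ne 0$ and $w:=v-eu$, so that (using $\langle u,u\rangle=1$) $\langle w,u\rangle=0$ and $w\ne 0$, whence $\{u,w\}$ is a basis and, by the remark, $c:=\langle w,w\rangle\in\Delta\setminus\{0\}$. For $m=xu+yw$ one finds $\langle m,m\rangle=x\sigma(x)+cy\sigma(y)$, and since $Mw=bw+(b-a)eu$ a computation that substitutes the constraint $x\sigma(x)+cy\sigma(y)=1$ yields $\langle m,Mm\rangle=a+(b-a)\big(ey\sigma(x)+cy\sigma(y)\big)$ on that locus. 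Hence $(\Nm(M)-a)/(b-a)=\{ey\sigma(x)+cy\sigma(y)\mid x\sigma(x)+cy\sigma(y)=1\}$, a $\Delta$-ellipse with two foci with parameters $(\delta_1,\delta_2,d_1,d_2)=(1,c,e,c)$ in the sense of Definition \ref{d1}; since $b-a\ne 0$, the last clause of Definition \ref{d1} shows $\Nm(M)$ is a $\Delta$-ellipse (with two foci). The main obstacle is the remark of the second paragraph: without it, in the orthogonal case under the eigenvector hypothesis one would only know $\langle v,v\rangle\in\Delta_2$, and then the set displayed in the third paragraph need not be the $\Delta$-convex hull of $\{a,b\}$; the Gram-determinant computation is precisely what upgrades $\Delta_2$ to $\Delta$.
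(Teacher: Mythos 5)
Your proof is correct and follows essentially the same route as the paper: normalize an eigenvector with $\langle u,u\rangle\in\Delta\setminus\{0\}$, complete it to an orthogonal basis whose second vector also has norm in $\Delta\setminus\{0\}$ (your Gram-determinant remark is the abstract form of the paper's explicit construction $f_2:=a_2e_1-a_1e_2$ with $\langle f_2,f_2\rangle=\langle f_1,f_1\rangle$), and then read off either the $\Delta$-convex hull of the eigenvalues or a two-focus ellipse as in Lemma \ref{aa5}. If anything your write-up is more complete: the paper's printed proof stops after treating the case where the second basis vector is an eigenvector, leaving the application of Lemma \ref{aa5} in the remaining case implicit, whereas you carry out that computation and exhibit the parameters $(1,c,e,c)$ explicitly.
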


\begin{proof}
If $\Delta _2=\Delta$, then $\langle w,w\rangle \in \Delta \setminus \{0\}$ for any $w\in L^2\setminus \{0\}$, because $\langle \ ,\ \rangle$ is definite up to $2$. Call $c_1$ and $c_2$ the two eigenvalues of $M$ with $c_1$ with eigenvalue $u$ with $\langle u,u\rangle \in \Delta \setminus \{0\}$. Take $t\in L^\ast$ such that
$t\sigma (t) = 1/\langle u,u\rangle$ and set $f_1:= tu$. Write $u = a_1e_1+a_2e_2$ with $a_i\in L$ and set $f_2:= a_2e_1-a_1e_2$. We have $\langle f_i,f_j\rangle =0$
for all $i\ne j$. We have $\langle f_2,f_2\rangle =\langle f_1,f_1\rangle =1$. Taking $\frac{1}{c_2-c_1}(M-c_1\II _{2\times 2})$ instead of $M$ we reduce to the case $c_1=0$
and $c_2=1$. Since $f_1$ and $f_2$ is an orthonormal basis, we may use it instead of $e_1$ and $e_2$ to compute $\Nm (M)$. If $f_2$ is an eigenvector
of $M$, then it has eigenvalue $1$ and hence $M$ is unitarily equivalent to  $0\II _{1\times 1}\oplus \II _{1\times 1}$ and hence $\Nm (M) =\Delta \cap (1-\Delta )$ (the $\Delta$-convex hull of $0$ and $1$).
\end{proof}

\begin{proposition}\label{aa7}
Assume either $\mathrm{char}(L)\ne 2$ or that $L$ is perfect. Assume that $\langle \ ,\ \rangle$ is definite up to $2$ and take $M\in M_{2,2}(L)$ with a unique eigenvalue, $c$. Then $c\in L$. Assume also
that either $\Delta _2=\Delta$ or that $M$ has an eigenvector $u$ with $\langle u,u\rangle \in \Delta$. Then either $M =c\II _{2\times 2}$ and
$\Nm (M) =\{c\}$ or $\Nm (M)$ is an ellipse with one focus.
\end{proposition}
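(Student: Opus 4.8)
The plan is to mirror the proof of Proposition~\ref{aa6}, with the nilpotent rank-one normal form playing the role there of the two orthogonal eigenvectors. First I would check that $c\in L$: the characteristic polynomial of $M$ is $t^{2}-\op{tr}(M)\,t+\det(M)\in L[t]$, and since $M$ has a single eigenvalue in $\overline{L}$ it equals $(t-c)^{2}$, so $\op{tr}(M)=2c$ and $\det(M)=c^{2}$. If $\mathrm{char}(L)\ne 2$ then $c=\op{tr}(M)/2\in L$; if $L$ is perfect of characteristic $2$ then $\op{tr}(M)=0$ and $c$ is the unique square root of $\det(M)\in L$, which lies in $L$ because the Frobenius of $L$ is onto. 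Replacing $M$ by $M-c\II_{2\times 2}$ I reduce to $c=0$, since this only translates $\Nm(M)$ by $-c$ and a translate of a $\Delta_2$-ellipse with one focus is again a $\Delta_2$-ellipse by Definition~\ref{d1}. Now $M$ is nilpotent; if $M=0$ we are in the first alternative ($M=c\II_{2\times 2}$, $\Nm(M)=\{c\}$), so assume $M\ne 0$. Then $M$ has rank one, Cayley--Hamilton gives $M^{2}=0$, hence $\op{im}M\subseteq\ker M$ with both spaces one-dimensional, so $\op{im}M=\ker M=\langle u\rangle$, where $u\ne 0$ is an eigenvector (necessarily $Mu=0$, as $0$ is the only eigenvalue).

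Next I would choose a suitable basis. If $\Delta_2=\Delta$, Lemma~\ref{aa1} (applicable since $\langle\ ,\ \rangle$ is definite up to $2$) gives an orthonormal basis $f_1,f_2$ of $L^2$ with $f_1$ spanning $\langle u\rangle$, so $c':=\langle f_2,f_2\rangle=1$. Otherwise the eigenvector $u$ satisfies $\langle u,u\rangle\in\Delta\setminus\{0\}$; picking $t\in L^\ast$ with $t\sigma(t)=\langle u,u\rangle^{-1}$ (possible because $\Delta\setminus\{0\}$ is a group) and setting $f_1:=tu$ gives $\langle f_1,f_1\rangle=1$, and I take $f_2$ to be any nonzero vector orthogonal to $f_1$ --- it exists because definiteness up to $2$ makes the form non-degenerate --- with $c':=\langle f_2,f_2\rangle\in\Delta_2\setminus\{0\}$. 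In both cases $Mf_1=0$ and $Mf_2\in\op{im}M=\langle f_1\rangle$, say $Mf_2=bf_1$ with $b\in L^\ast$ (if $b=0$ then $M=0$). For $m=xf_1+yf_2$ one computes $\langle m,m\rangle=x\sigma(x)+c'y\sigma(y)$ and $\langle m,Mm\rangle=b\sigma(x)y$, so $\Nm(M)=\{b\sigma(x)y\mid x\sigma(x)+c'y\sigma(y)=1\}$; writing $z=by$ this becomes $\{\sigma(x)z\mid x\sigma(x)+c'(b\sigma(b))^{-1}z\sigma(z)=1\}$. Since $b\sigma(b)\in\Delta\setminus\{0\}\subseteq\Delta_2\setminus\{0\}$, $c'\in\Delta_2\setminus\{0\}$, and $\Delta_2\setminus\{0\}$ is a multiplicative group, the parameters $(1,\,c'(b\sigma(b))^{-1})$ lie in $\Delta_2\setminus\{0\}$, so $\Nm(M)$ is a $\Delta_2$-ellipse with one focus at $0$ (a $\Delta$-ellipse when $\Delta_2=\Delta$); undoing the translation by $c$ finishes the argument, which at this last step is just the computation of Lemma~\ref{aa5.0}.

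The hard part is not conceptual but lies in the bookkeeping of the second case: since $f_2$ cannot be normalized there, the parameter $c'$ must be carried along, and checking that the resulting set still satisfies Definition~\ref{d1} is precisely where the group structure of $\Delta_2\setminus\{0\}$ and the inclusion $\Delta\subseteq\Delta_2$ are needed. The other delicate point is the deduction $c\in L$ in characteristic $2$, which is exactly why the hypothesis ``$\mathrm{char}(L)\ne 2$ or $L$ perfect'' is imposed.
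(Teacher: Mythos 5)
Your proof is correct and follows essentially the same route as the paper: deduce $c\in L$ from the characteristic polynomial, translate to $c=0$, use nilpotency and rank one to put $M$ in the form $Mf_1=0$, $Mf_2=bf_1$ in a basis adapted to the eigenvector, and recognize the computation of Lemma~\ref{aa5.0}. The only (harmless) difference is that the paper normalizes $f_2$ as well, via the conjugate-swap vector which has the same norm as $u$, and then rescales $M$ by $1/a_{12}$, whereas you carry the parameter $c'=\langle f_2,f_2\rangle\in\Delta_2\setminus\{0\}$ and absorb $b$ into the variable, landing on a $\Delta_2$-ellipse directly from Definition~\ref{d1}.
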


\begin{proof}
The characteristic polynomial of $M$ has degree $2$ and $c$ is its only root over $\overline{L}$. We have $c\in L$, because in the case $\mathrm{char}(L)=2$ we assumed that $L$ is perfect. Taking $M-c\II _{2\times 2}$ instead of $M$ we reduce to the case $c=0$. By assumption $M$ has an eigenvector $u$ with $\langle u,u\rangle \in \Delta\setminus \{0\}$. Take $t\in L^\ast$ such that
$t\sigma (t) = 1/\langle u,u\rangle$ and set $f_1:= tu$. Write $u = a_1e_1+a_2e_2$ with $a_i\in L$ and set $f_2:= a_2e_1-a_1e_2$. We have $\langle f_i,f_j\rangle =0$
for all $i\ne j$. We have $\langle f_2,f_2\rangle =\langle f_1,f_1\rangle =1$. Write $M = (a_{ij})$, $i=1,2$, in the orthonormal basis $f_1$
and $f_2$. We have $a_{11}=a_{21}=0$. Since $0$ is the only eigenvalue of $M$, we have $a_{22}=0$.
If $f_2$ is an eigenvector of $M$, then $M = 0\II _{2\times 2}$. Hence we may assume that $a_{12}\ne 0$. Apply Lemma \ref{aa5.0} to $(1/a_{12})M$.
\end{proof}

\section{Ordered fields}\label{S3}

Let $(K,<)$ be any ordered field. Fix a real closure $\Rr \supseteq K$ (\cite[Theorem 1.3.2]{bcr}) and call again $<$ the ordering of $\Rr$ extending the
ordering $<$ of $K$. Set $K_0:= K$. For each $i\ge 1$ let $K_i\subseteq \Rr$ be the field obtained adding to $K _{i-1}$ all the square-roots of the positive (with respect to the ordering $<$ of $\Rr$) elements of $K_{i-1}$. Set $\widetilde{K}:= \cup K_{i\ge 1}$. $\widetilde{K}$ is the smallest subfield of $\Rr$ such that $K\subseteq \widetilde{K}$
and all positive elements of $\widetilde{K}$ are square (\cite[Proposition 16.4]{har}; ordered fields with this property are called \emph{euclidean} in \cite[Proposition 16.2]{har}. Set $\widetilde{L} = \widetilde{K}(i)$ and use the complex conjugation $\sigma '$ to get a positive definite sesquilinear form
up to any $n$. Since each square is contained in $\Delta$, $(\widetilde{L},\sigma )$ has $\Delta =\widetilde{K}_{\ge 0} \supseteq \Delta _n$ and so $\Delta =\Delta _n$ for all $n$.

\begin{proof}[Proof of Proposition \ref{aa4}:]
If $z=x+yi\in L$ with $x,y\in K$, then $\langle z,z\rangle =x^2+y^2\ge 0$ with equality if and only if $x=y=0$. Hence $\langle \ ,\ \rangle$ is definite up to any $m>0$.
Since $\Delta$ contains all squares, we have $\Delta _m =\Delta$ for all $m\ge 2$. If $\Nm (M) =\{c\}$ for some $c\in K$, then $M =c\II _{n\times n}$ by \cite[Proposition 1]{bal}.
Now assume the existence of $a, b\in \Nm (M)$ such that $a\ne b$ and take $u, v\in L^n$ such that
$\langle u,u\rangle = \langle v,v\rangle =1$, $\langle u,Mu\rangle =a$ and $\langle v,Mv\rangle =b$. By Lemma \ref{aa1} we 
may assume that $u=e_1$ and $v=e_2$. Set $N:= M_{|(Le_1+Le_2)}$. We have $a, b\in \Nm (N)$ and $\Nm (N)\subseteq \Nm (M)$. Write $N = (a_{ij})$, $i, j=1,2$. We have $a_{11} =a\in K$, $a_{22}=b\in K$, $a_{21} =\langle v,Mu\rangle = \langle Mv,u\rangle = \sigma (\langle u,Mv\rangle ) = \sigma (a_{12})$. Hence $N^\dagger =N$. Write $a_{12} = x+yi$ with $x, y\in K$. Let $f(t)$ be the characteristic polinomial of $N$. Taking $N -\frac{a_{11}+a_{22}}{2}\II _{2\times 2}$ instead of $N$ we reduce to the case
$a_{11}=-a_{22}$, i.e. $N$ has trace $0$, i.e. $f(t) = t^2-a_{11}^2 -a_{12}\sigma (a_{12}) =t^2-a_{11}^2-x^2-y^2$. Since sums of squares of elements of $K$ are squares by
our assumption on $K$, we have $f(t) =t^2-d^2$ for some $d\in K$. Hence all the eigenvalues of $N$ are contained in $K$. If $d\ne 0$, then $N$
has two different eigenvalues, $d$ and $-d$, and Lemma \ref{b3} gives that $N$ is unitarily equivalent to $(-d)\II _{1\times 1}\oplus d\II _{1\times 1}$. In this case
we apply Remark \ref{bb1}. If $d=0$, then $a_{11}=x=y=0$ and so $N= 0\II _{2\times 2}$. Thus $\Nm (N) =\{0\}$, contradicting the assumption $a\ne b$.
\end{proof}

\begin{proposition}\label{aa4=}
Take $K$ with an ordering $<$ such that each positive element of $K$ is a square. Take $L:= K(i)$ with $\sigma$ induced by the map $i\mapsto -i$. Take $M\in M_{2,2}(K)$.
Then $\Nm (M)$ is either a point, or a $\Delta$-segment or a $\Delta$-ellipse with one or $2$ foci.
\end{proposition}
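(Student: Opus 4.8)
\noindent The plan is to split on the number of distinct eigenvalues of $M$ and then quote Propositions \ref{aa6} and \ref{aa7}. Since $K$ carries an ordering, $\op{char}(K)=0$, so $\op{char}(L)=0\ne 2$. For $z=x+yi\in L$ with $x,y\in K$ we have $\langle z,z\rangle=\sigma(z)z=x^2+y^2\ge 0$, with equality only when $x=y=0$; more generally $\langle u,u\rangle=\sum_j(x_j^2+y_j^2)\ge 0$ for $u=(x_j+y_ji)_j\in L^n$, and it vanishes only for $u=0$. Hence $\langle\ ,\ \rangle$ is definite up to every $n$ and $\Delta_n\subseteq K_{\ge 0}$ for all $n$. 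Conversely $\Delta$ contains every square of $K$, hence (by the hypothesis on $K$) every nonnegative element of $K$, so $\Delta=K_{\ge 0}=\Delta_n$ for all $n$. This is the same computation used at the start of Section \ref{S3} and in the proof of Proposition \ref{aa4}.

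Next I would check that both eigenvalues of $M$ lie in $L$. The characteristic polynomial $f(t)=t^2-\op{tr}(M)t+\det(M)$ has coefficients in $K$, with discriminant $d:=\op{tr}(M)^2-4\det(M)\in K$. If $d\ge 0$ then $d$ is a square in $K$ and both roots of $f$ lie in $K\subseteq L$; if $d<0$ then $-d>0$ is a square, say $-d=e^2$ with $e\in K$, so $d=(ie)^2$ and the roots $\frac12(\op{tr}(M)\pm ie)$ lie in $L$. Thus $M\in M_{2,2}(K)\subseteq M_{2,2}(L)$ has all eigenvalues in $L$, and by the previous paragraph the hypotheses ``$\langle\ ,\ \rangle$ definite up to $2$'' and ``$\Delta_2=\Delta$'' appearing in Propositions \ref{aa6} and \ref{aa7} hold.

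Now the case analysis. If $M$ has two distinct eigenvalues $c_1\ne c_2$, Proposition \ref{aa6} applies and gives that $\Nm(M)$ is either a $\Delta$-ellipse with two foci or the $\Delta$-convex hull of $\{c_1,c_2\}$, that is, a $\Delta$-segment. If $M$ has a single eigenvalue $c$, then $c$ is the double root of $f$, so $c=\op{tr}(M)/2\in K\subseteq L$, and Proposition \ref{aa7} applies (its hypothesis $\op{char}(L)\ne 2$ holds) and gives that either $M=c\II_{2\times 2}$ with $\Nm(M)=\{c\}$ a point, or $\Nm(M)$ is a $\Delta$-ellipse with one focus. In every case $\Nm(M)$ is a point, a $\Delta$-segment, or a $\Delta$-ellipse with one or two foci, which is the assertion.

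As for where the content sits: this proposition is essentially a bookkeeping corollary of Propositions \ref{aa6} and \ref{aa7}, so the only points that genuinely need care are (i) that the degree-$2$ characteristic polynomial always splits over $L$ — which is exactly where the euclidean-ordered structure of $K$ is used, through ``$d<0\Rightarrow -d$ is a square'' — and (ii) matching terminology, i.e. recognizing the $\Delta$-convex hull of two distinct points as what the statement calls a $\Delta$-segment. I do not expect either to be an obstacle; the substantive work has already been done in the proofs of Propositions \ref{aa6} and \ref{aa7}. (One could alternatively run a direct elliptical-range-theorem style computation for the $2\times 2$ matrix, but routing through \ref{aa6} and \ref{aa7} is shorter and avoids repeating their normalizations.)
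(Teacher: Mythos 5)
Your proof is correct and follows essentially the same route as the paper: both arguments use the euclidean hypothesis on $K$ to split the degree-$2$ characteristic polynomial over $L$ and then dispatch the two-eigenvalue and one-eigenvalue cases to Propositions \ref{aa6} and \ref{aa7} respectively. The only cosmetic difference is that the paper first normalizes $M$ to trace zero so that $f(t)=t^2+d$, whereas you work directly with the discriminant; you also spell out the verification that $\Delta_2=\Delta$ and that the form is definite, which the paper leaves to the discussion at the start of Section \ref{S3}.
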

\begin{proof}
Write $M = (m_{ij})$, $i, j=1,2$. Using $M-\frac{m_{11}+m_{22}}{2}\II _{2\times 2}$ instead of $M$ we reduce to the case
$m_{11}+m_{12} =0$. Hence the characteristic polynomial $f(t)$ of $M$ is of the form $f(t)=t^2+d$ for some $d\in K$. If $d\le 0$ (resp. $d>0$), then it has $2$ roots in $K$ (resp. $L$), because every positive element of $K$ has a square root in $K$. If $d\ne 0$, then we apply Proposition \ref{aa6}. If $d=0$, then either $M =0\II _{2\times 2}$ or we apply Proposition \ref{aa7}.
\end{proof}

\begin{proof}[Proof of Proposition \ref{aa9}:]
Let $\Rr$ be a real closure of $(K,<)$. Since $(K,<)$ is euclidean, we have $\Delta =\Delta _m$ for every $m>0$ and $\langle \ ,\ \rangle _{\sigma '}$ is definite up to any $m>0$. Take $u, v\in L^n$ such that $\langle u,u\rangle = \langle v,v\rangle = 1$, $a=\langle u,Mu\rangle$ and $b= \langle v,Mv\rangle$. Since $a\ne b$, $u$ and $v$
are not proportional. Let $W$ be the linear span $u$ and $v$.  By Lemma \ref{aa1} we may assume $n=2$ with the matrix $N:= M_{|W}$. If $N$ has a unique eigenvalue, $c$,
then we apply Proposition \ref{aa7}. If $N$ has two different eigenvalues, both of them in $L$, we use Proposition \ref{aa7}.
Now assume that $N$ has two eigenvalues $c_i\in \Rr (i)$, $i=1,2$, none of them in $L$. Since the characteristic polynomial of $N$ has
coefficients in $K$, these eigenvalues generate a degree $2$ Galois extension $K_1$ of $K$, i.e. there are a non-square $m\in K$ such that $K_1(g)$ with $g^2=e$.
Since $(K,<)$ is euclidean, $e<0$ and $-e = f^2$ for some $f\in K$. Thus $K_1 =K(i) =L$, a contradiction.
\end{proof}

If $K$ is real closed, then Proposition \ref{aa9} is trivial, because in that case $\Nm (M)$ is a semi-algebraically connect bounded and closed semi-algebraic subset
of $K$ (see Example \ref{da1}), i.e. a closed interval (\cite[Proposition 2.1.7]{bcr}).

\begin{remark}\label{da1}
Take as $K$ a real closed ordered field (\cite[\S 1.2]{bcr}) and take $L= K(i)$ with $\sigma$ the complex conjugation. Thus $L$ is algebraically closed, $\sigma '=\sigma$ and $\sigma ^{'2}$ is the identity.
Therefore for every $n>0$ $\langle \ ,\ \rangle $ and $\langle \ ,\ \rangle _{\sigma '}$ are definite up to $n$. Since $K_{\ge 0}$ is the set of all squares of elements
of $K$, we have $\Delta _n \subseteq K_{\ge 0} =\Delta$ and so $\Delta _n=\Delta$ for all $n$. The set $S^{2n-1}:= \{u\in L^n\mid \langle u,u\rangle  =1\}$ is a closed and bounded subset of $L^n=K^{2n}$ for the euclidean topology and the map $u\mapsto \langle u,Mu\rangle$ is real algebraic. Thus $\Nm (M)$ is a closed and bounded semi-algebraic subset of $L$ (\cite[Proposition 2.5.7]{bcr}). Since the sphere $S^{2n-1}$ is semi-algebraically connected in the sense
of \cite[Definition 2.4.2]{bcr}, $\Nm (M)$ is semi-algebraically connected. By \cite[Proposition 2.5.13]{bcr} $\Nm (M)$ is semi-algebraically path connected. Take $a, b\in L$ with $a\ne b$. The $\Delta$-convex hull
$A\subseteq L$ is the segment $\{ta+(1-t)a\}_{t\in K, 0\le t\le 1}$. Hence $\Delta$-convexity is preserved by any $K$-affine map. Since $L$ is algebraically closed, to prove that
$\Nm (M)$ is convex, it is sufficient to prove that ellipse with one or two foci are $\Delta$-convex, but we prefer to follow the convexity proof given in \cite{pt}. To show that ellipses
with one foci are disks of $L=K^2$ (resp. an ellipse with two foci plus its interior in the sense of the euclidean topology) we may use the following remarks. First of all we reduce
to the case $\delta _1=\delta _2=1$. Under this assumption they are the numerical range of the matrix $M$ appearing in Lemma \ref{aa5.0} (resp. Lemma \ref{aa5}). On $L\setminus \{0\}$ the function $|z| $ is semi-algebraic and so $\Re (z/|z|)$ and $\Im (z/|z|)$. We use this functions instead of the functions $\cos $ and $\sin$ to show
that $\{|z| \le 1/2\} =\Nm (M)$ in the case of Lemma \ref{aa5.0} following the proof in \cite[Example 1, page 1]{gr}. To adapt the proof of \cite[Example 2, pages 2,3 and Lemma 1.1]{gr} to get the $\Delta$-convexity in the set-up of Lemma \ref{aa5} we need
to make sense of some trigonometric expression like $\sin $ and $\cos$; if $K\ne \RR$ we cannot use the real exponential function to get the trigonometric function (\cite{a}). With our substitute of the functions $\cos$ and $\sin$, the first one is even and the second
one is odd. For any fixed $z\in S^1:= \{|z|=1\}$ there are $a,b\in S^1$ with $a^2 =z$ and $b^3=z$. Instead of $\cos (u+v )$ (resp. $\sin (u+v )$) we may use
$\cos (u)\cos (v) -\sin (u)\sin (v)$ (resp. $\sin (u)\cos (v)+\cos (u) \sin (v)$).
\end{remark}

Theorem \ref{i1} easily follows from the following result (\cite[Lemma 2]{pt}; the proof in \cite[page 4]{pt} works with minimal modifications).

\begin{proposition}\label{ii1}
Assume that $K$ is a real closed field and that $L =K(i)$ with $\sigma$ the complex conjugation. Fix $M\in M_{n,n}(L)$ such that $M^\dagger =M$ and any $t\in L$.
Then either $\nu _M^{-1}(t) =\emptyset$ or $\nu _M^{-1}(t)$ is semi-algebraically arc-connected.
\end{proposition}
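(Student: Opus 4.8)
\textbf{Proof proposal for Proposition \ref{ii1}.}

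The plan is to mimic, in the semi-algebraic category over $K$, the classical argument that the fibers of the numerical-range map of a Hermitian matrix are path-connected. First I would fix $t\in L$ and assume $\nu_M^{-1}(t)\ne\emptyset$. Since $M^\dagger=M$, Remark \ref{u1} gives $\Nm(M)\subseteq K$, so we may assume $t\in K$; replacing $M$ by $M-t\II_{n\times n}$ (which is still Hermitian), we reduce to $t=0$. Thus I must show that $Z:=\{u\in L^n\mid \langle u,u\rangle=1,\ \langle u,Mu\rangle=0\}$ is semi-algebraically arc-connected, viewing $L^n=K^{2n}$ and recalling from Remark \ref{da1} that $\langle\ ,\ \rangle$ is definite up to $n$, $\Delta_n=\Delta=K_{\ge0}$, and that $Z$ is a closed bounded semi-algebraic subset of the sphere $S^{2n-1}$.

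Next I would set up the standard spectral reduction. Since $K$ is real closed, $L=K(i)$ is algebraically closed, and a Hermitian $M$ over $L$ is unitarily diagonalizable with all eigenvalues in $K$ by Lemma \ref{b2} (applied with $\sigma'=\sigma$); the usual linear-algebra proof of the spectral theorem is purely algebraic and works verbatim over $L$, using Lemma \ref{b3} for orthogonality of eigenspaces and Lemma \ref{aa1} for orthonormalization (valid since $\Delta_n=\Delta$). So after a unitary change of basis I may assume $M=\mathrm{diag}(\lambda_1,\dots,\lambda_n)$ with $\lambda_j\in K$. Writing $u=(u_1,\dots,u_n)$ and $r_j:=u_j\sigma(u_j)=|u_j|^2\ge0$, the conditions defining $Z$ become $\sum_j r_j=1$ and $\sum_j \lambda_j r_j=0$. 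The key point is that $\nu_M^{-1}(0)$ is the preimage, under the "modulus-squared" map $\pi\colon S^{2n-1}\to\{r\in K^n_{\ge0}\mid \sum r_j=1\}$, of the slice $P:=\{r\ge0,\ \sum r_j=1,\ \sum\lambda_j r_j=0\}$, which is a (semi-algebraic) convex polytope in $K^n$.

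Then I would run the connectivity argument in two stages. The base $P$ is convex, hence semi-algebraically arc-connected (the parametrized segment $s\mapsto (1-s)r+sr'$ works over any real closed field). For the fiber direction, I would use that $\pi$ restricted to $S^{2n-1}$ has semi-algebraically arc-connected fibers: over a point $r$ with support $T=\{j\mid r_j>0\}$, the fiber is a product of circles $\prod_{j\in T}\{w\in L\mid w\sigma(w)=r_j\}$ together with the constraint $u_j=0$ for $j\notin T$, and each such circle is semi-algebraically arc-connected using the substitute trigonometric parametrization $\theta\mapsto \sqrt{r_j}(\Re(z/|z|)+\Im(z/|z|)\,i)$ described in Remark \ref{da1}. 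To join two arbitrary points $u,u'\in Z$: first move within the fiber over $\pi(u)$ — no, rather, first connect $\pi(u)$ to $\pi(u')$ by a segment in $P$ and lift it; the subtlety is that the support $T$ can jump along the segment, so I would instead first slide $u$ (keeping $\langle u,Mu\rangle=0$) to a point lying over a vertex or a fixed interior point $r^\*$ of $P$ with full support, then do the same for $u'$, and finally join the two lifts over $r^\*$ inside the torus fiber, which is arc-connected. Concretely, for fixed $r$ one connects $u$ to $u'$ over $r$ by rotating each coordinate circle independently.

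\textbf{Main obstacle.} The delicate step is handling the jumps of the support $T$ and, relatedly, making the lifts of a path in $P$ genuinely semi-algebraic and continuous even where some $r_j\to0$ (where $\sqrt{r_j}$ is still semi-algebraic and continuous, but the phase of $u_j$ becomes indeterminate). The clean fix is the "reduce to a fixed interior point $r^\*$" strategy above: choose once and for all an $r^\*$ in the relative interior of $P$ (it exists since $P$ is a nonempty polytope and the relative interior of a nonempty convex semi-algebraic set is nonempty), pick the lift with all phases equal to $1$, and for each $u\in Z$ build the explicit semi-algebraic arc $s\mapsto u(s)$ with $|u_j(s)|^2=(1-s)r_j(u)+s r^\*_j$ and phase of $u_j(s)$ interpolating from $\arg u_j$ to $1$ via the Remark \ref{da1} trigonometric surrogates when $r^\*_j>0$, and simply equal to $\arg u_j$ (harmlessly) when a coordinate only vanishes at an endpoint. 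One checks $\langle u(s),u(s)\rangle=1$ by construction and $\langle u(s),Mu(s)\rangle=\sum_j\lambda_j|u_j(s)|^2=(1-s)\cdot0+s\cdot\sum_j\lambda_j r^\*_j=0$ since $r^\*\in P$. Thus every point of $Z$ is semi-algebraically joined to the single point $u^\*$ over $r^\*$, and $Z$ is semi-algebraically arc-connected. Everything used — semi-algebraicity of $|z|$, $\Re(z/|z|)$, $\Im(z/|z|)$ on $L\setminus\{0\}$, and of $\sqrt{\ }$ on $K_{\ge0}$ — is quoted from Remark \ref{da1} and \cite[\S2]{bcr}, so no appeal to the real exponential is needed.
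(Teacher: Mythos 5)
Your proof is correct and follows essentially the same route as the paper's: reduce to $t=0$ with $t\in K$, unitarily diagonalize the Hermitian $M$ with eigenvalues in $K$, use the semi-algebraic arc-connectedness of the circle $\{z\sigma(z)=1\}$ to handle phases, and interpolate the moduli by the semi-algebraic square-root arc $s\mapsto\sqrt{(1-s)r_j+sr_j'}$. The only difference is organizational: the paper connects two points with coordinates in $K$ directly via $u_j(t)=\sqrt{(1-t)a_j^2+tb_j^2}$, whereas you route every point of the fiber through a fixed lift over a relative-interior point of the moment polytope $P$; both devices address the same support-degeneration issue.
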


\begin{proof}
If $t\notin K$, then $\nu _M^{-1}(t) =\emptyset$ by Lemma \ref{b2} and hence we may assume $t\in K$. If $t\in K$, then $M-t\II _{n\times n}$ is Hermitian. Since $\Nm (M-t\II _{n\times n})=
\Nm (M)-t$, taking $M-t\II _{n\times n}$ instead of $M$ we reduce to the case $t=0$. Since $UMU^\dagger$ is Hermitian and $\Nm (UMU^\dagger  )=\Nm (M)$ for every unitary $U$, we reduce to the case in which $M$ is diagonal, say $M =(m_{ij})$ with $m_{ii}\in K$ and $m_{ij}=0$ for all $i\ne j$. Take $a=(a_1,\dots ,a_n)\in \nu _M^{-1}(0)$.
For all $(z_1,\dots ,z_n)\in L^n$ with $\sigma (z_i)z_i =1$ for all $i$ we have $(z_1a_1,\dots ,z_na_n)\in \nu _M^{-1}(0)$, because $M$ is a diagonal matrix.
Since the circle $S^1= \{\sigma (z)z =1\} \subset L$ is semi-algebraically arc-connected, it is sufficient to prove that if $a=(a_1,\dots ,a_n), b=(b_1,\dots ,b_n)\in \nu _M^{-1}(0)$
with $a_i,b_i\in K$ for all $i$, then $a$ and $b$ are connected by a semi-algebraic arc contained in $\nu _M^{-1}(0)$. As in \cite[page 4]{pt} it
is sufficient to use the semi-algebraic arc $u(t) = (u_1(t),\dots u_n(t))$ from the unit interval $[0,1]$ into $\nu _M^{-1}(0)\cap K^n$ with $u_j(t) = \sqrt{(1-t)a_j^2+tb_j^2}$.
\end{proof}

\begin{proof}[Proof of Theorem \ref{i1}:]
(\cite[page 4]{pt}) Fix $a, b\in \Nm (M)$ such that $a\ne b$. Taking $\frac{1}{b-a}(M-a\II _{n\times n})$ instead of $M$ we reduce to the case $a=0$ and $b=1$. Since $\Delta$ is the set of all non-negative elements of $K$, it is sufficient to prove that $\Nm (M)$ contains the closed interval $[0,1]\subset K$. Take $u, v\in L^n$ such that
$\langle u,u\rangle =\langle u,u\rangle =1$, $\langle u,Mu\rangle =0$ and $\langle v,Mv\rangle =1$. Since $\{0,1\}\in K$, we have $\langle u,M_+u\rangle =0$,
$\langle u,M_-u\rangle =0$, $\langle v,M_+v\rangle =1$ and $\langle v,M_--v\rangle =0$. Since $M_-$ is Hermitian, and $\{u,v\}\subset \nu _{M_-}^{-1}(0)$, there
is a semi-algebraic arc $m: [0,1]\to \nu _{M_-}^{-1}(0)$ with $m(0) =u$ and $m(1) =v$ (Proposition \ref{ii1}). We have $\langle m(t),Mm(t)\rangle = \langle m(t),M_+m(t)\rangle$, because
$\langle m(t),M_-m(t)\rangle =0$ for all $t\in [0,1]$. Since $M_+$ is Hermitian, $\langle m(t),M_+m(t)\rangle \in K$ for all $K$. The set $Z$ of all $ \langle m(t),M_+m(t)\rangle$, $t\in [0,1]$, is a
semi-algebraic arc contained in $K$ and hence it is an interval (\cite[Proposition 2.1.7]{bcr}). Since $\{0,1\}\subset Z\subseteq \Nm (M)$, then $[0,1]\subseteq \Nm (M)$.
\end{proof}

\begin{proof}[Proof of Corollary \ref{i2}:]
Identify $L^2$ with $K^2+iK^2$ sending any $z=(z_1,z_2)\in L^2$ to $(\Re z_1, \Re z_2,\Im z_1,\Im z_2)$ and then identify any $(a_1,a_2)\in K^2$ with $a_1+ia_2\in L$.
Take $A:= M+iN\in M_{n,n}(L)$. Since $M$ and $N$ are Hermitian, we have $A_+ =M$, $A_-=N$, $\Nm (M) \subset K$, $\Nm (N)\subset K$ and $c\in \Nm (A)$ if and only if $\Re c\in \Nm (M)$ and
$\Im c \in \Nm (N)$. Apply Theorem \ref{i1} to $A$.
\end{proof}

\providecommand{\bysame}{\leavevmode\hbox to3em{\hrulefill}\thinspace}


\begin{thebibliography}{99}

\bibitem{a} N. Alling, On exponentially closed fields, Proc. Amer. Math. Soc. 13 (1962), no. 5, 706--711.

\bibitem{bal} E. Ballico, On the numerical range of square matrices with coefficients in a degree $2$ Galois extension, arXiv: 1611.01915.

\bibitem{bcr} J. Bochnak, M. Coste and M.-F. Roy, Real Algebraic Geometry, Springer, Berlin, 1998.

\bibitem{cjklr} J. I. Coons, J. Jenkins, D. Knowles, R. A. Luke and P. X. Rault, Numerical ranges over finite fields, Linear Algebra Appl. 501 (2016), 37--47.



\bibitem{gr} K. E. Gustafson and D. K. M. Rao, Numerical Range. The Field of Values of Linear Operators and Matrices, Springer, New York, 1997.

\bibitem{har} R. Hartshorne, Geometry: Euclid and beyond, Undergraduate Texts in Mathematics, Springer-Verlag, New York, 2000.


\bibitem{lang} S. Lange, Algebra, Springer, Berlin, 2002.

\bibitem{lp} C.-K. Li and Y.-T. Poon, Convexity of the joint numerical range, SIAM J. Matrix Anal. Appl. 21 (1999), no. 2, 668--678.


\bibitem{pt} P.J. Psarrakos and M.J. Tsatsomeros, Numerical range: (in) a matrix nutshell, Notes, National Technical University, Athens, Greece, 2004.


\end{thebibliography}
\end{document}